\newcolumntype{C}{>{\centering\arraybackslash}X}
\newcolumntype{R}{>{\raggedleft\arraybackslash}X}
\newcolumntype{L}{>{\raggedright\arraybackslash}X}
\DeclareMathOperator*{\argmin}{arg\,min}
\newcommand{\im}{\mathrm{i}}
\newtheorem{proposition}{Proposition}
\begin{document}
\title{Failure Probability Constrained\\AC Optimal Power Flow}
\author{Anirudh~Subramanyam,
        Jacob~Roth,
        Albert~Lam,
        and~Mihai~Anitescu,~\IEEEmembership{Member,~IEEE}%
\thanks{The authors are with the Mathematics and Computer Science Division, Argonne National Laboratory, Lemont IL 60439.}}%

\markboth{}%
{Subramanyam, Roth, Lam and~Anitescu: %
    Failure Probability Constrained ACOPF}

\maketitle

\begin{abstract}
    Despite cascading failures being the central cause of blackouts in power transmission systems, existing operational and planning decisions are made largely by ignoring their underlying cascade potential. %
    This paper posits a reliability-aware AC Optimal Power Flow formulation that seeks to design a dispatch point which has a low operator-specified likelihood of triggering a cascade starting from any single component outage.
    By exploiting a recently developed analytical model of the probability of component failure,
    our Failure Probability-constrained ACOPF (FP-ACOPF) utilizes the system's expected first failure time as a smoothly tunable and interpretable signature of cascade risk.
    We use techniques from bilevel optimization and numerical linear algebra to efficiently formulate and solve the FP-ACOPF using off-the-shelf solvers.
    Extensive simulations on the IEEE 118-bus case show that, when compared to the unconstrained and N-1 security-constrained ACOPF, our probability-constrained dispatch points %
    can significantly lower the probabilities of long severe cascades and of large demand losses, while incurring only minor increases in total generation costs.
\end{abstract}

\begin{IEEEkeywords}
cascading failures, AC optimal power flow
\end{IEEEkeywords}

\IEEEpeerreviewmaketitle

\section{Introduction}

\IEEEPARstart{A}{} cascading failure in a power transmission system refers to a sequence of dependent outages of individual system components that successively disable parts of the grid, leading to a significant loss of served power or large blackout in the worst case.
Accounts of major blackouts reveal that cascading failures are often triggered by an initial event that is largely unpredictable (such as extreme weather) but are sustained by subsequent events that are causally linked via Kirchhoff's laws and automatic control actions of protection devices.
For example, the outage of a single component can lead to redistribution of power flows in the remainder of the network in a way that can cause large overcurrents on some transmission lines.
This, in turn, may trigger protection relays to disconnect these lines automatically if the current flow exceeds some threshold rating, or it may lead to eventual thermal failure if the overcurrents remain sustained for a long time.

The large direct and indirect costs associated with blackouts, along with mandatory standards set forth by the North American Electric Reliability Corporation (NERC) to address cascading outages, %
have motivated the development of a plethora of tools for the simulation and analysis of cascading failures; see reviews~\cite{baldick2008initial,papic2011survey,guo2017critical,haes2019survey} %
and references therein.
These tools can be broadly classified as
\emph{(a)} complex network approaches that consider the pure topological properties of power networks while ignoring or simplifying the underlying physics (e.g., see~\cite{hines2008centrality,brummitt2012suppressing}), %
\emph{(b)} high-level statistical models built either on historical/simulation data or on simplified power system physics (e.g., see~\cite{dobson2005loading,rahnamay2014stochastic}), %
\emph{(c)} quasi-steady-state methodologies that utilize DC or AC power flow (steady state) models, typically in conjunction with models of protection mechanisms or operator interventions \cite{yu2004practical,chen2005cascading,mei2008study,anghel2007stochastic,yang2020dynamic}, and rely on Monte Carlo or enumerative sampling (e.g., see~\cite{nedic2006criticality,yan2014cascading,henneaux2018benchmarking}), %
\emph{(d)} dynamics-based models in which the state evolution is resolved with a physics-based representation of the grid dynamics (e.g., see~\cite{demarco2001phase,song2015dynamic}) to study post-fault transient dynamics \cite{schafer2018dynamically}, synchronization \cite{dorfler2010synchronization,nishikawa2015comparative,rohden2016cascading}, or behavior over longer time horizons \cite{fabozzi2009simplified,khaitan2009fast,abhyankar2012real}. %

While these tools have proved immensely useful in deepening our understanding of cascading failures, their use for risk mitigation and decision-making have been largely restricted to limiting the propagation of a cascading failure, e.g., via controlled load shedding or intentional islanding, \emph{after} severe contingencies have already occurred (e.g., see~\cite{bienstock2011optimal,esmaeilian2016prevention,guo2020localization}). %
Existing practices for the prevention of cascading failures \emph{before} they occur, by tuning and modifying the controllable properties of the power network, has largely relied upon $N-k$ security criteria and simulation-based contingency analyses as notional surrogates for reducing cascading likelihood. %

Although simulation-based tools can influence long-term cascade mitigation solutions, such as line capacity or generator margin allocations and protection system enhancements (e.g., see~\cite{wang2001optimal,hardiman2004advanced,qi2014interaction}), %
they are fundamentally limited in preventing cascades in short-term operations such as economic dispatch or optimal power flow.
This is because they do not provide any direct functional relationship between control parameters and cascade potential, or they entail expensive Monte Carlo sampling and numerical integration requirments and are thus challenging to incorporate within optimization algorithms. %

This work proposes to incorporate in the classical ACOPF model, an analytical -- as opposed to simulation-based -- model of cascade severity that is an \emph{explicit function} of the network properties and dispatch point. %
In contrast to existing methods, we aim to determine a dispatch point subject to the constraint that the \emph{probability of individual component failure} remains below an operator-prescribed threshold.
Our model capitalizes upon results from~\cite{roth2019kinetic}, which in contrast to other approaches for simulating cascading failures, provides an \emph{analytic expression} for the failure probability as a function of the dispatch point.
This is achieved by modeling \emph{Gaussian load and generation fluctuations} in the AC power flow dynamics, and interpreting the latter as the diffusion of a particle in an energy landscape subject to stochastic forcing.
Large deviations theory then provides the means to analytically relate the failure probability to the underlying energy surface.

For a given system state (consisting of voltages and power flows) at equilibrium, the analytical expression for the failure probability of an individual component requires solving a nonlinear optimization problem that computes a ``most likely'' failure state starting from this equilibrium state.
However, since the latter is contingent on the dispatch point, explicitly constraining the failure probability of an \emph{individual component}, or of a \emph{cascading sequence of multiple components}, is tantamount to solving a bilevel optimization problem.
We settle for the former
and demonstrate that constraining individual failure probabilities, which is equivalent to increasing the system's expected first failure time--or decreasing its \textit{failure rate}--%
can be an effective surrogate for constraining the probability of a cascading failure sequence. %
Solving the bilevel model, however, is computationally challenging since it is nonlinear, nonconvex, and  %
involves eigenvalues and
determinants of high-dimensional matrices that scale with the network size.
Nevertheless, we show that this eigenvalue- and determinant-constrained model can be reformulated entirely algebraically and solved efficiently using standard solvers, if we exploit the low-rank nature of the failure constraints %
along with the first- and second-order optimality conditions of the nested problem.

Our key finding is that it is possible to modify the dispatch point so as to satisfy a prescribed threshold of failure probability. %
Specifically, the system's expected first failure time, and hence, the probability of cascading over a given time horizon, can be lowered %
significantly without incurring vastly higher generation costs or load shedding.

We believe this is the first work that %
captures some notion of cascading risk in operational dispatch.
Although it can be viewed as a probabilistic $N-1$ variant, our approach offers several advantages over classical $N-k$ models.
The first crucial difference is that, by tuning the system's \emph{failure rate limit}, we can systematically and \emph{smoothly} explore the trade-offs between cascade potential, dispatch costs and operator conservatism, {without significant increase in computational complexity}.
In contrast, $N-k$ approaches must resort to a non-smooth control of $k$ to achieve the same objective, while invariably incurring a sharp increase in combinatorial complexity.
Another subtle, yet practically useful, advantage of our approach is its \emph{interpretability}.
Indeed, the benefit (in terms of reliability) of increasing $k$ in $N-k$ approaches, is difficult to convey outside the domain.
Our approach, on the other hand, allows the system operator to decide between a \textit{failure rate limit} of $10^{-6} s^{-1}$ or $10^{-15} s^{-1}$ (for example), which is equivalent to deciding between observing the first failure once every $10^6$ or $10^{15}$ seconds.
This is a statement that is better aligned with the philosophy of regulatory constraints which tend to be in occurrences per unit of time.

The paper is organized as follows.
Section~\ref{sec:probability_model} presents assumptions, and reviews the failure probability model of~\cite{roth2019kinetic}. %
Section~\ref{sec:optimization_model} presents the failure probability-constrained ACOPF model along with its reformulation.
Section~\ref{sec:numerical_simulations} demonstrates the effectiveness of our method via extensive simulations, and Section~\ref{sec:conclusions} offers conclusions and directions for future work.

\section{Failure Probability Model}\label{sec:probability_model}

\subsection{Notation}\label{sec:probability_model:notation}
We use
$\mathcal{N}~=~\{1, \ldots, n_b\}$ to denote the set of buses, %
and $\mathcal{L} \subseteq \mathcal{N} \times \mathcal{N}$ %
to denote the set of transmission lines, where $l = (i,j) \in \mathcal{L}$ is a line from bus $i$ and to bus $j$.
We denote the set of generators by $\mathcal{G}~=~\{1, \ldots, n_g\}$, %
and let $\mathcal{G}_i$ be the set of generators connected to bus $i \in \mathcal{N}$; note that $\mathcal{G} = \cup_{i \in \mathcal{N}} \mathcal{G}_i$.
For ease of notation, we define $\mathcal{N}^{\prime} \coloneqq \{i \in \mathcal{N}: \mathcal{G}_i = \emptyset\}$ to denote the set of non-generator buses, \emph{i.e.}, those that are not connected to any generator and similarly,
$\mathcal{L}^{\prime} \coloneqq \{ (i, j) \in \mathcal{L} : \mathcal{G}_i = \emptyset \text{ or } \mathcal{G}_j = \emptyset\}$ to denote the set of lines that are connected to at least one non-generator bus.
The nodal admittance matrix $Y = G + \sqrt{-1} B \in \mathbb{C}^{n_b \times n_b}$ with conductance and susceptance matrices $G \in \mathbb{R}^{n_b \times n_b}$ and $B \in \mathbb{R}^{n_b \times n_b}$, respectively. %

We denote active and reactive power generations by $p_g, q_g \in \mathbb{R}^{n_g}$,
active and reactive power demands by $p_d, q_d \in \mathbb{R}^{n_b}$,
and net active and reactive powers by $p_{net}, q_{net} \in \mathbb{R}^{n_b}$, where we define $p_{net,i} \coloneqq p_{d,i} - \sum_{k \in \mathcal{G}_i} p_{g,k}$ and $q_{net,i} \coloneqq q_{d,i} - \sum_{k \in \mathcal{G}_i} q_{g,k}$ for $i \in \mathcal{N}$. %
The voltage magnitudes and angles are denoted by $V \in \mathbb{R}^{n_b}$ and $\theta \in \mathbb{R}^{n_b}$,
and the generator angular velocities are denoted by $\omega \in \mathbb{R}^{n_g}$.

For a vector $z \in \mathbb{C}^n$, we use $\lVert z \rVert$ and $z^*$ to denote its Euclidean norm and Hermitian transpose, respectively, and we use $e^{z}$ and $\log(z)$ to denote the vectors $(e^{z_1}, \ldots, e^{z_n}) \in \mathbb{C}^n$ and $(\log(z_1), \ldots, \log(z_n)) \in \mathbb{C}^n$.
Given vectors $z, \tilde{z} \in \mathbb{C}^n$, we use $z \circ \tilde{z}$ to denote the Hadamard product $(z_1 \tilde{z}_i, \ldots, z_n \tilde{z}_n) \in \mathbb{C}^n$.
For a matrix $A \in \mathbb{C}^{n \times n}$, we use $\rho(A)$, $\det(A)$, and $\mathop{\mathrm{adj}}(A)$ to denote its spectral radius, determinant and adjugate, respectively, and $A \succ 0$ ($\succeq 0$) to indicate that it is positive definite (semi-definite).
For a given matrix $A \succeq 0$ and vector $z \in \mathbb{C}^n$, we use $\lVert z \rVert_A$ to denote $\sqrt{z^* A z}$.

In formulating the probability model, it will be convenient to divide the %
    system state vector $(p_g, q_g, V, \theta, \omega) \in \mathbb{R}^m$ ($m \coloneqq 3n_g + 2n_b$)
into two distinct sets.
After selecting an arbitrary generator bus $\sigma \in \mathcal{N} \setminus \mathcal{N}^{\prime}$, $|\mathcal{G}_{\sigma}| = 1$, as the slack bus in the steady-state context and reference bus in the dynamics context,
we collect all voltage magnitudes at non-generator buses, and phase angles and angular velocities at non-slack buses in the state vector
$x = \left(
\{V_i\}_{i \in \mathcal{N}^{\prime}}, \{\theta_i\}_{i \in \mathcal{N} \setminus \{\sigma\}}, \{\omega_i\}_{i \in \mathcal{G} \setminus \mathcal{G}_{\sigma}}
\right)
\in \mathbb{R}^{d}$, $d = |\mathcal{N}^{\prime}| + |\mathcal{N}| + |\mathcal{G}| - 2$.
The remaining voltage components, angular velocities and power generations are aggregated in the vector
$y = \left(
\{V_i\}_{i \in \mathcal{N} \setminus \mathcal{N}^{\prime}}, \theta_\sigma, \omega_\sigma, p_g, q_g
\right)
\in \mathbb{R}^{m - d}
$.
This partition of the system variables, illustrated in Table~\ref{table:x_y_notation}, is purely for notational convenience.
In particular, any specified value of $y$ %
implicitly defines a set of state vectors $x$ that solve the power flow equations, as we shall elaborate in Section~\ref{sec:probability_model:main_results}. %

\begin{table}[!t]
    \caption{Partition of system variables into sub-vectors $x$ and $y$.}
    \label{table:x_y_notation}
    \centering
\begin{tabularx}{\columnwidth}{cCCCCCcCCCc}
    \toprule
    & \multicolumn{2}{c}{\makecell{Load buses\\$\mathcal{N}^{\prime}$}} & \multicolumn{4}{c}{\makecell{Generator buses\\$\mathcal{N} \setminus (\mathcal{N}^{\prime} \cup \{\sigma\})$}} & \multicolumn{4}{c}{\makecell{Slack bus\\$\sigma$}} \\
    \cmidrule(lr){2-3} \cmidrule(lr){4-7} \cmidrule(l){8-11} 
        & $\theta$     & $V$          & $\omega$     & $\theta$     & $V$          & $p_g, q_g$   & $\omega$     & $\theta$     & $V$      & $p_g, q_g$ \\ \midrule 
    $x$ & $\checkmark$ & $\checkmark$ & $\checkmark$ & $\checkmark$ &              &              &              &              &          & \\
    $y$ &              &              &              &              & $\checkmark$ & $\checkmark$ & $\checkmark$ & $\checkmark$ & $\checkmark$ & $\checkmark$ \\
    \bottomrule
\end{tabularx}
\end{table}

\subsection{Assumptions}\label{sec:probability_model:assumptions}
Motivated by the automatic control actions of protection relays, we assume that individual system components fail in a deterministic manner according to a component-specific state equation.
Specifically, we assume that a component, such as a transmission line $l \in \mathcal{L}$, fails if the value of a certain function $\Theta_l : \mathbb{R}^{m} \mapsto \mathbb{R}$ at the current state $(x, y)$ exceeds a critical value $\Theta_l^{\max}$.
Furthermore, we assume that after failure, a component remains failed over the entire dispatch horizon of the ACOPF.

The algebraic state functions $\Theta$ can be any continuously differentiable function of the state vector $(x, y)$, and hence, it can be used to model various component failures, such as under-voltage load shedding at buses or line surges in transformers.
For the purpose of computing a dispatch point, however, we assume that once initiated, component failures occur only due to relay trips caused by current overloads in transmission lines.
Suppose that $I_{l}^{\mathrm{trip}}$ denotes the emergency current rating of a line $l = (i, j) \in \mathcal{L}$.
In this case, one can define $\Theta_l(x, y)$ to be the square of the magnitude of current flow in line $l$ (which depends only on the voltages and phase angles at its terminal buses), and $\Theta_l^{\max}$ to be the square of the corresponding rating:
\begin{equation}\label{eq:line_energy}
    \begin{array}{rl}
    \displaystyle\Theta_l(x, y)
    &= \displaystyle\left(|Y_{ij}| \left\lvert V_i e^{\im \theta_i} - V_j e^{\im \theta_j} \right\rvert\right)^2 \\
    &= \displaystyle|Y_{ij}|^2 \left( V_i^2 + V_j^2 - 2 V_i V_j \cos (\theta_i - \theta_j) \right),  \\
    \displaystyle\Theta_l^{\max} &= (I_{l}^{\mathrm{trip}})^2.
    \end{array}
\end{equation}%
This is partly because it simplifies the exposition, and partly because the corresponding failure model has already been studied and validated against real cascade data in~\cite{roth2019kinetic,zheng_2010_bistable}.
We note, however, that the methodology does not preclude us from considering %
more general definitions of $\Theta$.
Importantly, the simulations in Section~\ref{sec:numerical_simulations} model several other protection mechanisms beyond current overloads.

In addition, we shall assume that
\textit{(i)} the network is lossless\footnote{%
    This assumption is not particularly restrictive since many high-voltage power transmission networks typically have resistance/reactance ratios below 0.2.
    Moreover, the standard DCOPF model for dispatch widely used in industry explicitly relies on this assumption~\cite{stott2009dc}; %
    in contrast, our model is far more general since we also consider nonlinearities, reactive powers and voltage magnitudes that the DCOPF model does not.
}, \textit{i.e.}, $G = 0$,
and
\textit{(ii)} only the subset $\mathcal{L}^{\prime}$ of transmission lines that connect to at least one non-generator bus have a nonzero likelihood of failing %
because of stochastic fluctuations in active and reactive power demand.

\subsection{Analytical model of line failures}\label{sec:probability_model:main_results}
We model the grid's electro-mechanical behavior using a system of stochastic differential equations (SDE), by introducing a scalar noise parameter $\tau > 0$ into the following variant%
of the standard, structure-preserving model \cite{demarco_1987_security,bergen_1981_structure,demarco_singular_1984,demarco_1985_small,pai_1989_energy}:
\begin{equation}\label{eq:sde}
\mathrm{d}x^{\tau}_t = (J-S)\nabla_x \mathcal{H}(x^{\tau}_t,y) \mathrm{d}t + \sqrt{2\tau S} \mathrm{d}W_t.
\end{equation}
Here, $x_t^{\tau} \in \mathbb{R}^d$ denotes the system state at time $t$,
$J$ and $S$ represent appropriate system matrices, and %
$W_t$ is a $d$-dimensional vector of independent Wiener processes. %
    When $\tau = 0$, equation \eqref{eq:sde} models the deterministic dynamics of the %
        state variables in $x$ as a function of the input vector $y$.
    Specifically, it is a singularly-perturbed version \cite{demarco_singular_1984} of the classical differential-algebraic structure-preserving model \cite[Chapter~7]{sauer1997power}. %
    When $\tau = 0$, equation \eqref{eq:sde} includes the standard second-order swing equations employed in the classical model, but augments its constant active power load modeling assumption by adding linear frequency-dependent load damping terms $D_l$ to the active power loads (see, e.g., \cite{arif2017load}) and voltage-dependent perturbation terms $D_v$ to the reactive power loads.
    The parameters $D_l$ and $D_v$ control the rates at which the phase angles and load voltages approach the real and reactive power flow equation manifolds, respectively \cite{bergen_1981_structure}.
    Crucially, equation \eqref{eq:sde} converges to the classical model as the noise parameter $\tau \to 0$ and damping terms $D_l, D_v \to 0$ but with the added advantages of global well-posedness (when $\tau = 0$)
    and modeling Gaussian perturbations in active and reactive power demands (when $\tau > 0$).
    This model has been widely used for transient stability analysis in both deterministic and stochastic regimes \cite{demarco_1987_security,demarco1990energy}.

    The detailed differential equations encapsulated within \eqref{eq:sde}, including definitions of $J$ and $S$, can be found in \cite{demarco_1987_security,zheng_2016_new,roth2019kinetic}. 
    The energy function $\mathcal{H}$ is then obtained as the first integral of the deterministic dynamics, leading to the gradient-based formulation \eqref{eq:sde}.
    Notably, the integral admits the following closed-form expression %
    :
\begin{equation}\label{eq:energy_function}
    \begin{aligned}
\mathcal{H}(x, y) \coloneqq & \tfrac{1}{2}\omega^{\top} M \omega + \tfrac{1}{2} (V \circ e^{\im \theta})^{*}\, Y\, (V \circ e^{\im \theta}) \\
&+ p_{net}^{\top} \theta + q_{net}^{\top} \log(V),
\end{aligned}
\end{equation}
where $M$ is the $n_g \times n_g$ diagonal matrix of generator masses.
Unlike equation \eqref{eq:line_energy}, the right-hand side of equation \eqref{eq:energy_function} depends on all components of $x$ and $y$.
It is worth pointing out that \eqref{eq:energy_function} has been widely used as a Lyapunov function for transient stability analysis and dynamic control of the structure preserving swing dynamics model in deterministic settings \cite{DemarcoLyapunov1988,sun2016direct},
and it is closely related to the widely-used transient energy function of \cite{fouad1988transient}. Similar to the latter, which is used to measure distance to instability, we use \eqref{eq:energy_function} to measure the probability of the stochastically perturbed dynamics to reach a triggering surface.

We note that the vector $y$ is assumed to have been fixed to appropriate values \textit{a priori}, and we shall return to choosing a value for $y$ in the subsequent section when we describe the ACOPF formulation.
For now, observe that the structure of the energy function \eqref{eq:energy_function} ensures that, for any specified value of $y$, any point $x \in \mathbb{R}^d$ that satisfies $\nabla_x \mathcal{H}(x, y) = 0$ is a solution to the lossless power flow equations:
\begin{gather}
p_{net,i} + \sum_{j \in \mathcal{N}} B_{ij} V_i V_j \sin(\theta_i - \theta_j) = 0, \;\; i \in \mathcal{N}, \label{eq:real_power_balance} \\
q_{net,i} - \sum_{j \in \mathcal{N}} B_{ij} V_i V_j \cos(\theta_i - \theta_j) = 0, \;\; i \in \mathcal{N}. \label{eq:imag_power_balance}
\end{gather}
In particular, any local energy minimizer,
\begin{equation}\label{eq:xbar}
\bar{x}(y) \in \argmin_{x \in \mathbb{R}^d} \mathcal{H}(x, y)
\end{equation}
defines a feasible solution to~\eqref{eq:real_power_balance}, \eqref{eq:imag_power_balance} and serves as a stable equilibrium %
for the network dynamics defined by $y$.
To simplify notation, we drop the dependence of $\bar{x}$ on $y$ in the remainder of the paper.

Under the SDE model~\eqref{eq:sde} initialized at $x_0^{\tau} = \bar{x}$, the failure probability of a line $l \in \mathcal{L}^{\prime}$ (considered in isolation from the rest of the network) can be quantified by its so-called \emph{first passage time}.
The latter is defined to be the first time at which the system state $x_t^{\tau}$ triggers the failure condition \eqref{eq:line_energy} for line~$l$.
Assuming $\Theta_l(\bar{x}, y) < \Theta_l^{\max}$, this is equivalent to
\begin{equation}\label{eq:first_passage_time}
T_{l}^{\tau}(y) \coloneqq \inf \{ t>0 : \Theta_l(x_t^{\tau}, y) \geq \Theta_l^{\max} \}.
\end{equation}
By exploiting results from large deviations theory~\cite{wf_2012_random,collet_2013_qst}, it was shown in \cite{roth2019kinetic} that, as $\tau \to 0$, $T_{l}^{\tau}(y)$ is an exponential random variable whose mean satisfies the following relation:
\begin{equation}\label{eq:fw-limit}
\lim_{\tau \to 0} \tau \log \left( \mathbb{E} \left[ T_{l}^{\tau} (y) \right] \right) =\hspace{-0.5em} \min_{x: \Theta_l(x_t^{\tau}, y) = \Theta_l^{\max}} \hspace{-0.5em}\mathcal{H}(x, y) - \mathcal{H}(\bar{x}, y)
\end{equation}
A point at which the minimum is obtained is called the \emph{most likely failure point} and is defined as
\begin{equation}\label{eq:xstar}
x^{\star}_l(y) \coloneqq \argmin_{x \in \mathbb{R}^d} \left\{\mathcal{H}(x, y) : \Theta_l(x, y) = \Theta_l^{\max} \right\},
\end{equation}
where we have implicitly assumed $x^{\star}_l$ to be unique\footnote{This is not a strong assumption based on the validation in~\cite{roth2019kinetic}.}.
As before, we drop the dependence of $x^{\star}_l$ on $y$ to simplify notation.

For finite values of $\tau$, we can thus use relation \eqref{eq:fw-limit} to compute a log-approximate \emph{failure rate} (\textit{i.e.}, the reciprocal of the mean failure time).
In addition to this log-approximation, a subexponential correction to the failure rate was also obtained in \cite{roth2019kinetic}
yielding the following expressions:
\begin{align}\label{eq:failure_rate_first}
\lambda_l^{{\tau}} (y) &\coloneqq \mathsf{pf}_{l}^1(y) \times \mathsf{ef}_l (y)\\
\mathsf{pf}^1_l(y) &\coloneqq \mathsf{pf}_l^0(y) \times \left(1 + \frac{\tau} { \mathcal{H}(x^{\star}_l,y) - \mathcal{H}(\bar{x},y)} \right). \label{eq:prefactor_first} \\
\mathsf{pf}^0_l (y) &\coloneqq %
\left\lVert\nabla_x \mathcal{H}(x^{\star}_l, y)\right\rVert_S^2 \sqrt{ \frac{ \det \nabla^2_{xx} \mathcal{H}(\bar{x}, y) }{2 \pi \tau C^{\star}_l (y)}} \label{eq:prefactor_zeroth}\\
\mathsf{ef}_l (y) &\coloneqq \exp \left [ - \frac{\mathcal{H}(x^{\star}_l, y) - \mathcal{H}(\bar{x}, y)}{\tau} \right ], \label{eq:energy_factor}
\end{align}
where $C^{\star}_l (y)$ is a factor accounting for the curvature of the failure surface in the vicinity of $x^{\star}_l$, and is given by:
\begin{align}
C^{\star}_l(y) &\coloneqq \nabla^\top_x \mathcal{H}(x^{\star}_l, y)
\mathop{\mathrm{adj}}\left(X_l(y)\right)
\nabla_x \mathcal{H}(x^{\star}_l, y) \label{eq:B-star} \\
X_l(y) &\coloneqq \nabla^2_{xx} \mathcal{H}(x^{\star}_l, y) - \mu^{\star}_l \nabla^2_{xx} \Theta_l (x^{\star}_l, y), \label{eq:L-star}
\end{align}
where
$\mu^{\star}_l \in \mathbb{R}$ is the optimal Lagrange multiplier in the constrained optimization problem~\eqref{eq:xstar}. %
In equations \eqref{eq:failure_rate_first}--\eqref{eq:energy_factor}, $\mathsf{pf}$ and $\mathsf{ef}$ stand for the \emph{pre-factor} and \emph{exponential factor}, respectively.
    The latter follows directly from equation \eqref{eq:fw-limit}.
    The former is a subexponential correction that is obtained by quantifying how the probability distribution over the system's state space changes with time.
    Specifically, from an initial \emph{quasistationary ensemble} \cite{collet_2013_qst} of particles distributed as $p_{qst}^{\tau}(x) \propto \exp \left[ -\tau^{-1} \left(\mathcal{H}(x, y) - \mathcal{H}(\bar{x}, y)\right) \right]$, probability mass is lost over time through the surface $\{x \in \mathbb{R}^d: \Theta_l(x, y) = \Theta_l^{\max}\}$.
    A Laplace integral approximation of this probability yields the (zeroth order) failure rate:
    $
    \mathsf{pf}_{l}^0(y) \times \mathsf{ef}_l (y).
    $
    However, for large values of $\tau$, this zeroth order expression can lead to non-physical behavior as the $1/\sqrt{\tau}$ term in $\mathsf{pf}_{l}^0(y)$ may cause the failure rate to decrease as $\tau$ increases.
    Therefore, the first order correction in equation \eqref{eq:prefactor_first} is used to remedy this behavior, and it has demonstrated strong empirical performance in approximating the failure rate across a wide range of $\tau$ \cite{roth2019kinetic}.%

In summary, the distribution of failure times for line $l \in \mathcal{L}'$ can be well-approximated by
$
    T_{l}^{\tau} (y) \sim \mathrm{Exp}(\lambda_l^{\tau} (y)),
$
and therefore, the probability of observing line $l$ fail in the first $t$ seconds can be given by
\begin{align}\label{eq:failure_probability}
    \mathbb{P}(T_{l}^{\tau} (y) \leq t) = 1 - \exp \left[ -\lambda_l^{\tau} (y) t \right].
\end{align}
A noteworthy feature of this model is that \eqref{eq:failure_probability} gives a closed-form expression for the failure probability, thus avoiding the need for direct integration of the differential model \eqref{eq:sde}.

\section{Failure Probability-constrained AC Optimal Power Flow Formulation}\label{sec:optimization_model}

\subsection{Conceptual formulation}\label{sec:optimization_model:conceptual}
The probability of transmission line failure~\eqref{eq:failure_probability} is a function of $y = \left(
\{V_i\}_{i \in \mathcal{N} \setminus \mathcal{N}^{\prime}}, \theta_\sigma, p_g, q_g
\right)
$,
the voltage magnitudes at generator buses, the voltage angle at the slack bus and the active and reactive power generations.
The choice of $y$ also determines the equilibrium operating point $\bar{x}(y)$
through the power flow equations~\eqref{eq:real_power_balance}, \eqref{eq:imag_power_balance};
we shall denote these simply as
$
x = 
\left(
\{V_i\}_{i \in \mathcal{N}^{\prime}}, \{\theta_i\}_{i \in \mathcal{N} \setminus \{\sigma\}},
\right)
$
and along with the vector $y$, they constitute decision variables\footnote{Note that we do not include the frequency variables $\omega$ as decisions, since we can set $\omega = 0$ without loss of optimality.} in our Failure Probability-constrained ACOPF (FP-ACOPF) formulation:
\begin{alignat}{1}
\mathop{\text{minimize}}_{x, y} \;& \sum_{k \in \mathcal{G}} c_k(p_{g,k}) \label{eq:generation_cost} \\
\text{subject to} \;& \eqref{eq:real_power_balance}-\eqref{eq:imag_power_balance}, \notag \\
& p_{g,k}^\mathrm{min} \leq p_{g,k} \leq p_{g,k}^\mathrm{max}, \;\;k \in \mathcal{G}, \label{eq:real_power_gen_bounds} \\
& q_{g,k}^\mathrm{min} \leq q_{g,k} \leq q_{g,k}^\mathrm{max}, \;\;k \in \mathcal{G}, \label{eq:imag_power_gen_bounds} \\
& V_{i}^\mathrm{min} \leq V_{i} \leq V_{i}^\mathrm{max}, \;\; i \in \mathcal{N}, \label{eq:voltage_magnitude_bounds} \\
& \Theta_l(x, y) \leq (I_{l}^{\mathrm{lim}})^2, \;\; l \in \mathcal{L}, \label{eq:line_flow_bounds} \\
& \lambda_l^{\tau} (y) \leq \lambda_l^\mathrm{lim} \coloneqq -t_H^{-1} \log\left( 1-\epsilon_{l}^{\mathrm{lim}} \right), l\in \mathcal{L}' \label{eq:rate_constraint_conceptual}
\end{alignat}
The objective function~\eqref{eq:generation_cost} minimizes the cost of generation where $c_k$ is a function representing the cost of generating active power $p_{g,k}$ at generator $k \in \mathcal{G}$.
The constraints~\eqref{eq:real_power_gen_bounds}, \eqref{eq:imag_power_gen_bounds} and \eqref{eq:voltage_magnitude_bounds} ensure that the power generations and voltage magnitudes stay within predefined limits.
The constraint~\eqref{eq:line_flow_bounds} limits the amount of current flow on the transmission lines, where we have distinguished the flow limit $I_{l}^{\mathrm{lim}}$ from the value $I_{l}^{\mathrm{trip}}$ used in the failure model~\eqref{eq:line_energy}.
The latter quantity is the value at which protection relays would automatically disconnect the line, and it is typically much larger than the former, which is determined based on thermal considerations.
Finally, constraint~\eqref{eq:rate_constraint_conceptual} imposes an upper bound on the failure rate of line $l \in \mathcal{L}'$, which by definition, is equivalent to delaying its expected failure time to be greater than $\left(\lambda_l^\mathrm{lim}\right)^{-1}$.

Observe that, using expression~\eqref{eq:failure_probability},
the failure rate constraint~\eqref{eq:rate_constraint_conceptual} is equivalent to a chance constraint
which ensures that the probability
of line $l \in \mathcal{L}'$ failing over a time horizon $t_H$
is less than some operator-prescribed limit $\epsilon_{l}^{\mathrm{lim}}$ (say 1\%):
\begin{equation}
\mathbb{P}(T_{l}^{\tau} (y) \leq t_H) \leq \epsilon_{l}^{\mathrm{lim}}, \;\; l\in \mathcal{L}'.
\label{eq:chance_constraint_conceptual}
\end{equation}
Constraint~\eqref{eq:chance_constraint_conceptual} only limits the failure probability of a single line.
While it can also be used as an approximation for limiting the probability of a cascading failure sequence involving more than one line\footnote{%
    For example, limiting the probability that line~$l_2$ will fail after line~$l$ within time $t_H$ can be enforced via:
        \begin{align*}
            \int_{0}^{t_H} \mathbb{P}\left( T_{l}^{\tau} (y) \in [t, t + \mathrm{d}t] \right) \mathbb{P}\left( T_{l_2}^{\tau} (y; Y') \in [t, t_H] \right) \mathrm{d}t \leq \epsilon^{\mathrm{lim}},
        \end{align*}
        where $T_{l_2}^{\tau} (y; Y')$ is the failure time of line~$l_2$ calculated using a modified admittance matrix $Y'$ after removing line~$l$.
        Since both probabilities in the above integral involve exponential random variables \eqref{eq:failure_probability}, it can be analytically expressed as a single algebraic constraint using the corresponding failure rates.
        Observe that this probability is upper bounded by the left-hand side of~\eqref{eq:chance_constraint_conceptual}.%
},
we highlight that assessing the true probability of a cascading sequence needs careful consideration and modeling of subsequent events, including system dynamics and protection behavior, which is outside the scope of the proposed optimization model.
The individual failure rate constraint~\eqref{eq:rate_constraint_conceptual} is thus closer in spirit to a probabilistic $N-1$ constraint, as opposed to a full-fledged cascading failure constraint.

The presented model is a standard ACOPF formulation with the exception of the \emph{failure rate constraint}~\eqref{eq:rate_constraint_conceptual}. %
We now present an efficient reformulation of these rate constraints that can be incorporated into standard optimization solvers.

\subsection{Reformulation of failure rate constraints: Key steps}\label{sec:optimization_model:reformulation}

Constraining the failure rate via~\eqref{eq:rate_constraint_conceptual} is nontrivial because it  involves \textit{(i)} the solution of a nested nonconvex optimization problem~\eqref{eq:xstar} to obtain $x^\star_l(y)$, and \textit{(ii)} constraints involving determinants of large matrices, namely $\nabla^2_{xx} \mathcal{H}(x, y)$ and $X_l(y)$, see~\eqref{eq:prefactor_zeroth} and~\eqref{eq:B-star}.
This section shows that these can be circumvented by exploiting the low-rank property of the line failure function $\Theta_l$, and a partial Talyor expansion of the energy function $\mathcal{H}$ around the equilibrium operating point~$x$.

\subsubsection{Low rank factorization of failure function}
We shall capitalize on the fact that, the Hessian of the failure function $\Theta_l$ admits a low-rank factorization of the form:
\begin{equation}\label{eq:low_rank_factorization}
\nabla_{xx}^2 \Theta_l (x, y) = Q_l(x, y) \, K_l (x, y) \, Q_l^\top (x, y),
\end{equation}
where $Q_l (x, y)$ is a $d \times r_l$ matrix, with $r_l \ll m$ and $K_l(x, y)$ is a $r_l \times r_l$ diagonal matrix.
Observe that this low-rank structure is natural whenever $\Theta_l$ is a function of a small number of state variables, as is the case when $\Theta_l$ models the failure of transmission lines, system buses, generators or transformers.

Although such a low-rank factorization can be computed numerically (\emph{e.g.}, using an eigenvalue decomposition), it can also be obtained analytically in several cases, such as in the examples of Section~\ref{sec:probability_model:assumptions}.
To illustrate this, Appendix~\ref{appendix:closed_form_low_rank} presents explicit closed-form expressions for $Q_l$ and $K_l$ when $\Theta_l$ is the line overcurrent function~\eqref{eq:line_energy}.
In particular, it shows that $r_l = 3$ for a line $l = (i, j)$ where both $i, j \in \mathcal{N}'$ are non-generator buses, whereas $r_l = 2$ otherwise.
The fact that the rank $r_l \leq 3$ has nothing to do with the specific functional form of~\eqref{eq:line_energy}; in fact, $r_l \leq 3$ will always hold whenever $\Theta_l$ is a function only of $V_i, V_j$ and the angle difference $\theta_i - \theta_j$.

\subsubsection{Taylor approximation of energy function}
The value of the energy function at the most likely failure point, $\mathcal{H}(x^\star_l, y)$, is approximated using a quadratic Taylor polynomial centered around the equilibrium operating point~$x$ that is determined by the ACOPF formulation:
\begin{align}
\hspace{-0.5em}
\mathcal{H}(x^\star_l, y)
&= \mathcal{H}(x, y) + \frac12 (x^\star_l - x)^\top \nabla^2_{xx} \mathcal{H}(x, y) (x^\star_l - x) \label{eq:energy_function_quadratic}
\end{align}
where we have ignored the first-order term since
the FP-ACOPF constrains~$x$ to satisfy the power flow equations~\eqref{eq:real_power_balance}, \eqref{eq:imag_power_balance}, which are equivalent to $\nabla_x \mathcal{H} (x, y) = 0$.
We shall comment on the accuracy of~\eqref{eq:energy_function_quadratic} in the next subsection.

\subsection{Reformulation of the `most likely failure point' problem}
Equations~\eqref{eq:low_rank_factorization} and~\eqref{eq:energy_function_quadratic} can be used to reformulate the nested nonconvex \emph{most likely failure point} optimization problem~\eqref{eq:xstar}.
In particular, the following proposition shows that they can be used to generate tractable algebraic constraints under which the solution of~\eqref{eq:xstar} can be characterized by its first- and second-order optimality conditions
\begin{proposition}\label{prop:bilevel_reformulation}
    Fix a line $l \in \mathcal{L}'$, and assume that $x, x^\star_l \in \mathbb{R}^d$ and $y \in \mathbb{R}^{m - d}$ are such that:
    \begin{enumerate}
        \item $\nabla^2_{xx} \mathcal{H} (x, y) \succ 0$, and
        \item the Taylor approximation~\eqref{eq:energy_function_quadratic} is applicable.
    \end{enumerate}
    If $x^\star_l$ and $\mu^{\star}_l \geq 0$ satisfy~\eqref{eq:bilevel_kkt_stationarity}--\eqref{eq:bilevel_sosc}, then $x^\star_l$ is a local solution of the optimization problem~\eqref{eq:xstar}.
    \begin{align}
    & \nabla^2_{xx} \mathcal{H}(x, y) (x^\star_l - x) = \mu^{\star}_l \nabla_x \Theta_l (x^\star_l, y) \label{eq:bilevel_kkt_stationarity}\\
    & \Theta_l (x^\star_l, y) = \Theta_l^{\max} \label{eq:bilevel_feasibility} \\
    & \mu^{\star}_l \, \rho \left( 
    A_l
    \right) < 1, \label{eq:bilevel_sosc}
    \end{align}
    where $A_l \coloneqq K_l(x^\star_l, y)\, Q_l^\top (x^\star_l, y)\, [\nabla^{2}_{xx} \mathcal{H}(x, y) ]^{-1} \, Q_l (x^\star_l, y)$ is a $r_l \times r_l$ matrix.
\end{proposition}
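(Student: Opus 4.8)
The plan is to recognize $\Theta_l(x^\star_l,y)=\Theta_l^{\max}$ together with the quadratic approximation~\eqref{eq:energy_function_quadratic} as exactly the KKT system of a reformulated version of problem~\eqref{eq:xstar}, and then to check that~\eqref{eq:bilevel_sosc} is precisely the second-order sufficient condition (SOSC) for that reformulated problem. First I would substitute~\eqref{eq:energy_function_quadratic} into the objective of~\eqref{eq:xstar}, so that the problem becomes $\min_{z}\;\tfrac12 z^\top \nabla^2_{xx}\mathcal{H}(x,y)\,z$ subject to $\Theta_l(x+z,y)=\Theta_l^{\max}$, where $z\coloneqq x^\star_l-x$ and $x,y$ are treated as fixed parameters. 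Writing the Lagrangian $\mathcal{L}(z,\mu)=\tfrac12 z^\top \nabla^2_{xx}\mathcal{H}(x,y)\,z-\mu\bigl(\Theta_l(x+z,y)-\Theta_l^{\max}\bigr)$ and setting $\nabla_z\mathcal{L}=0$ yields~\eqref{eq:bilevel_kkt_stationarity} (using $\nabla_z\Theta_l(x+z,y)=\nabla_x\Theta_l(x^\star_l,y)$), while primal feasibility is~\eqref{eq:bilevel_feasibility}. So~\eqref{eq:bilevel_kkt_stationarity}--\eqref{eq:bilevel_feasibility} are the first-order necessary (KKT) conditions, and assumption~1, $\nabla^2_{xx}\mathcal{H}(x,y)\succ0$, guarantees that a constraint-qualification-satisfying stationary point with nonnegative multiplier is well defined.

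The substantive step is the second-order condition. The Hessian of the Lagrangian with respect to $z$ is $H_l\coloneqq\nabla^2_{xx}\mathcal{H}(x,y)-\mu^\star_l\,\nabla^2_{xx}\Theta_l(x^\star_l,y)$ — note this is exactly $X_l(y)$ from~\eqref{eq:L-star}, with the $\mathcal{H}$-Hessian now evaluated at $x$ rather than $x^\star_l$, which is the intended effect of the approximation. The SOSC for an equality-constrained problem requires $v^\top H_l v>0$ for all $v$ in the tangent space $\{v:\nabla_x\Theta_l(x^\star_l,y)^\top v=0\}$; a sufficient (and here convenient) strengthening is to ask $H_l\succ0$ on all of $\mathbb{R}^d$. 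I would establish $H_l\succ0\iff\mu^\star_l\,\rho(A_l)<1$ as follows: factor $\nabla^2_{xx}\Theta_l=Q_lK_lQ_l^\top$ via~\eqref{eq:low_rank_factorization} and write $H_l=\nabla^2_{xx}\mathcal{H}(x,y)^{1/2}\bigl(I-\mu^\star_l\,\nabla^2_{xx}\mathcal{H}(x,y)^{-1/2}Q_lK_lQ_l^\top\nabla^2_{xx}\mathcal{H}(x,y)^{-1/2}\bigr)\nabla^2_{xx}\mathcal{H}(x,y)^{1/2}$; since the middle factor is a symmetric congruence-equivalent of $I-\mu^\star_l M$ with $M\coloneqq\nabla^2_{xx}\mathcal{H}(x,y)^{-1/2}Q_lK_lQ_l^\top\nabla^2_{xx}\mathcal{H}(x,y)^{-1/2}$, positivity of $H_l$ is equivalent to $\mu^\star_l\lambda_{\max}(M)<1$. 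Finally, by the standard fact that $Q_l^\top B Q_l$ and $B^{1/2}Q_lQ_l^\top B^{1/2}$ (more precisely, the cyclic rearrangement) share nonzero eigenvalues, the nonzero eigenvalues of $M$ coincide with those of $K_lQ_l^\top\nabla^2_{xx}\mathcal{H}(x,y)^{-1}Q_l=A_l$; hence $\lambda_{\max}(M)=\rho(A_l)$ (the eigenvalues being real and nonnegative if $K_l\succeq0$, or in general one argues with the spectral radius of the generally non-symmetric but diagonalizable $A_l$), and $\mu^\star_l\rho(A_l)<1$ is exactly~\eqref{eq:bilevel_sosc}.

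Assembling the pieces: given $x,x^\star_l,\mu^\star_l\geq0$ satisfying~\eqref{eq:bilevel_kkt_stationarity}--\eqref{eq:bilevel_sosc}, the point $z=x^\star_l-x$ satisfies the KKT conditions of the reformulated problem, LICQ holds (the single constraint gradient $\nabla_x\Theta_l(x^\star_l,y)$ is nonzero, which follows from~\eqref{eq:bilevel_kkt_stationarity} and assumption~1 unless $x^\star_l=x$, a case excluded by $\Theta_l(\bar x,y)<\Theta_l^{\max}$), and the SOSC holds because $H_l\succ0$ implies $v^\top H_l v>0$ on the tangent space a fortiori. By the classical second-order sufficiency theorem for nonlinear programs, $z$ — equivalently $x^\star_l$ — is a strict local minimizer of the reformulated problem, which under assumption~2 is the problem~\eqref{eq:xstar}, completing the proof.

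The main obstacle I anticipate is the spectral bookkeeping in the middle paragraph: one must be careful that $A_l$ as defined is generally \emph{not} symmetric (it is $K_l$ times a symmetric PSD matrix), so ``$\rho(A_l)$'' rather than ``$\lambda_{\max}$'' is the right object, and one needs the similarity $A_l=K_l^{1/2}\bigl(K_l^{1/2}Q_l^\top\nabla^2_{xx}\mathcal{H}(x,y)^{-1}Q_lK_l^{1/2}\bigr)K_l^{-1/2}$ (valid when $K_l\succ0$; the general case handled by continuity or by working directly with the congruence above) to see that its spectrum is real and equals that of the symmetric matrix $M$ restricted to its range. A secondary subtlety worth a sentence is justifying that the SOSC-on-tangent-space version suffices — i.e., one does not actually need the full $H_l\succ0$, only positivity on $\ker\nabla_x\Theta_l(x^\star_l,y)^\top$ — but since~\eqref{eq:bilevel_sosc} delivers the stronger global positive-definiteness it is cleanest to prove that and note it implies what is needed; I would remark that this is why~\eqref{eq:bilevel_sosc} is stated as a clean rank-$r_l$ spectral-radius test rather than a projected eigenvalue condition.
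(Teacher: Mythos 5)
Your proof is correct and follows essentially the same route as the paper's: it reads \eqref{eq:bilevel_kkt_stationarity}--\eqref{eq:bilevel_feasibility} as the KKT conditions of problem~\eqref{eq:xstar} under the Taylor approximation~\eqref{eq:energy_function_quadratic}, and establishes positive definiteness of the Lagrangian Hessian $\nabla^2_{xx}\mathcal{H}(x,y)-\mu^{\star}_l Q_l K_l Q_l^\top$ by the congruence with $[\nabla^2_{xx}\mathcal{H}(x,y)]^{-1/2}$ together with the fact that $AB$ and $BA$ share nonzero eigenvalues, exactly as in the paper. One small caution: when $K_l$ is indefinite, $\lambda_{\max}$ of the symmetrized matrix need not equal $\rho(A_l)$ (a dominant negative eigenvalue inflates $\rho$ without threatening definiteness), but since $\lambda_{\max}\leq\rho(A_l)$ and the proposition only needs the implication $\mu^{\star}_l\,\rho(A_l)<1\Rightarrow$ positive definiteness, your argument — like the paper's — goes through.
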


\begin{proof}
    See Appendix~\ref{appendix:proof_bilevel}.
\end{proof}

Equations~\eqref{eq:bilevel_kkt_stationarity} and~\eqref{eq:bilevel_feasibility} are the first-order optimality conditions, whereas the spectral inequality~\eqref{eq:bilevel_sosc} is derived from the second-order optimality condition of the nested problem~\eqref{eq:xstar}.
It is noteworthy that the matrix $A_l$ in the left-hand side of the latter inequality is a small $r_l \times r_l$ matrix, and hence it is possible to reformulate the inequality {in a completely algebraic form without requiring a numerical eigenvalue computation.}

When $r_l = 2$, it can be verified (\emph{e.g.}, using the characteristic polynomial of $A_l$) that inequality~\eqref{eq:bilevel_sosc} is equivalent to:
\begin{equation}\label{eq:spectral_inequality_r2}
\mu^{\star}_l \mathop{\mathrm{tr}}(A_l) - (\mu^{\star}_l)^2 \det(A_l) < 1,
\end{equation}
where the trace and determinant can be computed in algebraic closed-form, since $A_l$ is a $2 \times 2$ matrix.

When $r_l = 3$, it is nontrivial to get a tractable reformulation, since $A_l$ need not be symmetric.
We consider two cases.
\begin{itemize}
    \item If $K_l(x^\star_l, y) \succ 0$, then $\mu^{\star}_l \rho(A_l) < 1$ is equivalent to $\mu^{\star}_l \rho(\hat{A}_l) < 1$, $\hat{A}_l \coloneqq K_l^\frac12\, Q_l^\top \, [\nabla^{2}_{xx} \mathcal{H}(x, y) ]^{-1} \, Q_l K_l^\frac12$. This is a symmetric matrix, and hence, we can use Sylvester's criterion to enforce that all $r_l$ leading principal minors of $I - \mu^{\star}_l \hat{A}_l$ (which are computable in algebraic closed-form) must be positive.
    A similar trick can be used if $K_l(x^\star_l, y)$ is negative definite.
    
    \item If $K_l(x^\star_l, y)$ is indefinite, then it is difficult to enforce the spectral inequality, and we settle for a relaxation which enforces a constraint on the sum of its $r_l$ eigenvalues:
    \begin{equation}\label{eq:spectral_inequality_r3}
    \mu^{\star}_l \mathop{\mathrm{tr}}(A_l) < r_l,
    \end{equation}
    where again the trace can be computed in closed-form.
\end{itemize}

We close with some remarks on the assumptions of Proposition~\ref{prop:bilevel_reformulation}.
The first assumption requires that the Hessian of the energy function at the equilibrium point~$x$ is positive definite.
Note that it is satisfied whenever~$x$
is obtained via the energy minimization problem~\eqref{eq:xbar}.
Although using the power flow equations \eqref{eq:real_power_balance}, \eqref{eq:imag_power_balance} as a surrogate for the latter cannot guarantee this condition, we found that it was almost never violated in our experiments.
With regards to the second assumption, the Taylor approximation~\eqref{eq:energy_function_quadratic} is guaranteed to be accurate only when $x^\star_l$ is near $x$.
We provide empirical evidence in Section~\ref{sec:numerical_simulations} that the approximation is not severe.

\subsection{Treatment of high-dimensional determinants}

Using equations \eqref{eq:failure_rate_first} and \eqref{eq:prefactor_first}, the failure rate constraint \eqref{eq:rate_constraint_conceptual} can be equivalently written as:
\begin{align*}
\lambda_l^{\tau} (y)
&=
\mathsf{pf}_l^0 (y) \cdot \mathsf{ef}_l(y) \cdot \left(1 + \frac{\tau} { \mathcal{H}(x^{\star}_l,y) - \mathcal{H}(\bar{x},y)} \right)
\leq 
\lambda_l^\mathrm{lim}.
\end{align*}
The formula for the prefactor $\mathsf{pf}_l^0 (y)$ that appears in the above constraint involves determinants of (the typically large) $d\times d$ matrices, $\nabla^2_{xx} \mathcal{H}(x, y)$ and $X_l(y)$, via \eqref{eq:prefactor_zeroth} and \eqref{eq:B-star} respectively.
The presence of these determinants can slow down computation of the rate constraints and their gradients, and may result in ill-conditioning.
Fortunately, equations~\eqref{eq:low_rank_factorization} and~\eqref{eq:energy_function_quadratic} can be used to circumvent these issues.
The following proposition uses the matrix-determinant lemma~\cite{Harville2008} to obtain a reformulation of the prefactor that avoids large determinants.

\begin{proposition}\label{prop:determinants}
    Fix a line $l \in \mathcal{L}'$, and assume that $x, x^\star_l \in \mathbb{R}^d$ and $y \in \mathbb{R}^{m - d}$ satisfy the conditions of Proposition~\ref{prop:bilevel_reformulation}.
    If $x^\star_l$ and $\mu^{\star}_l \geq 0$ satisfy~\eqref{eq:bilevel_kkt_stationarity}--\eqref{eq:bilevel_sosc}, then the prefactor and energy factor in equation~\eqref{eq:failure_rate_first} admit the following reformulation:
    \begin{align}
    \mathsf{pf}_l^1 (y) &= \mathsf{pf}_l^0 (y) \left(1 + \frac{2\tau}{\mu^{\star}_l \alpha_l}\right) \label{eq:prefactor_first_reformulated} \\
    \mathsf{pf}_l^0 (y) &= \frac{(\mu^{\star}_l)^{3/2} \lVert \nabla_x \Theta_l (x^\star_l, y) \rVert_S^2}{\sqrt{2\pi \tau \left( \alpha_l \det (W_l) + \beta_l\right)}} \label{eq:prefactor_zeroth_reformulated} \\
    \mathsf{ef}_l(y) &= \exp\left(-\frac{\mu^{\star}_l\alpha_l}{2\tau}\right), \label{eq:energy_factor_reformulated}
    \end{align}
    where $W_l \coloneqq I - \mu^{\star}_l A_l$ is a $r_l \times r_l$ matrix, $A_l$ is as in Proposition~\ref{prop:bilevel_reformulation}, and the scalars $\alpha_l \coloneqq (x^\star_l - x)^\top \nabla_x \Theta_l (x^\star_l, y)$ and $\beta_l \coloneqq (x^\star_l - x)^\top\, Q_l(x^\star_l, y) \, \mathop{\mathrm{adj}}(W_l)\, K_l(x^\star_l, y)\, Q_l^\top (x^\star_l, y) \, (x^\star_l - x)$.
\end{proposition}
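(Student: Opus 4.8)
The plan is to simplify the three factors in the failure rate~\eqref{eq:failure_rate_first} one at a time by substituting the low-rank factorization~\eqref{eq:low_rank_factorization}, the quadratic model~\eqref{eq:energy_function_quadratic}, and the first-order conditions~\eqref{eq:bilevel_kkt_stationarity}. Throughout I abbreviate $H \coloneqq \nabla^2_{xx}\mathcal{H}(x,y)$ and $\delta \coloneqq x^\star_l - x$, and I use that, since $x$ solves the power flow equations, it serves as $\bar{x}$, so $\nabla_x\mathcal{H}(x,y) = 0$; differentiating the quadratic model then gives $\nabla_x\mathcal{H}(x^\star_l,y) = H\delta$ and $\nabla^2_{xx}\mathcal{H}(x^\star_l,y) = H$. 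In this notation condition~\eqref{eq:bilevel_kkt_stationarity} reads $H\delta = \mu^\star_l\nabla_x\Theta_l(x^\star_l,y)$, and $X_l(y)$ from~\eqref{eq:L-star} becomes $H - \mu^\star_l Q_l K_l Q_l^\top$ with $Q_l, K_l$ evaluated at $(x^\star_l,y)$.

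The energy factor is the quick step: the quadratic model followed by~\eqref{eq:bilevel_kkt_stationarity} gives $\mathcal{H}(x^\star_l,y) - \mathcal{H}(\bar{x},y) = \tfrac12\delta^\top H\delta = \tfrac12\mu^\star_l\,\delta^\top\nabla_x\Theta_l(x^\star_l,y) = \tfrac12\mu^\star_l\alpha_l$, and substituting this into~\eqref{eq:energy_factor} and~\eqref{eq:prefactor_first} yields~\eqref{eq:energy_factor_reformulated} and~\eqref{eq:prefactor_first_reformulated}. For the zeroth-order prefactor~\eqref{eq:prefactor_zeroth} I would first rewrite its numerator using the KKT stationarity of the original problem~\eqref{eq:xstar}, namely $\nabla_x\mathcal{H}(x^\star_l,y) = \mu^\star_l\nabla_x\Theta_l(x^\star_l,y)$, which gives $\lVert\nabla_x\mathcal{H}(x^\star_l,y)\rVert_S^2 = (\mu^\star_l)^2\lVert\nabla_x\Theta_l(x^\star_l,y)\rVert_S^2$. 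Then~\eqref{eq:prefactor_zeroth_reformulated} follows once I show $C^\star_l(y) = \mu^\star_l\det(H)\bigl(\alpha_l\det(W_l) + \beta_l\bigr)$, because the $\det(H)$ factor cancels the one inside~\eqref{eq:prefactor_zeroth} and the powers of $\mu^\star_l$ collect to $(\mu^\star_l)^{3/2}$.

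The core of the argument, and the step I expect to require the most care, is this identity for $C^\star_l(y)$ in~\eqref{eq:B-star}. The matrix-determinant lemma applied to $X_l(y) = H - \mu^\star_l Q_l K_l Q_l^\top$ gives $\det(X_l(y)) = \det(H)\det(I - \mu^\star_l K_l Q_l^\top H^{-1}Q_l) = \det(H)\det(W_l)$, where $W_l$ is nonsingular since the spectral inequality~\eqref{eq:bilevel_sosc} forbids $1$ from being an eigenvalue of $\mu^\star_l A_l$. Writing $\mathop{\mathrm{adj}}(X_l(y)) = \det(X_l(y))\,X_l(y)^{-1}$, invoking the Woodbury identity in the form $H X_l(y)^{-1} H = H - Q_l\bigl(-\tfrac{1}{\mu^\star_l}K_l^{-1} + Q_l^\top H^{-1}Q_l\bigr)^{-1}Q_l^\top$, and using $\nabla_x\mathcal{H}(x^\star_l,y) = H\delta$, I obtain $C^\star_l(y) = \det(H)\det(W_l)\bigl[\delta^\top H\delta - \delta^\top Q_l(-\tfrac{1}{\mu^\star_l}K_l^{-1} + Q_l^\top H^{-1}Q_l)^{-1}Q_l^\top\delta\bigr]$. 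Finally $\delta^\top H\delta = \mu^\star_l\alpha_l$ from the energy-factor step, and the purely algebraic identity $-(-\tfrac{1}{\mu^\star_l}K_l^{-1} + Q_l^\top H^{-1}Q_l)^{-1} = \mu^\star_l W_l^{-1}K_l$ (verified by multiplying out $W_l = I - \mu^\star_l K_l Q_l^\top H^{-1}Q_l$), together with $\mathop{\mathrm{adj}}(W_l) = \det(W_l)W_l^{-1}$, turn the bracket into $\mu^\star_l\alpha_l + \mu^\star_l\beta_l/\det(W_l)$, giving exactly $C^\star_l(y) = \mu^\star_l\det(H)(\alpha_l\det(W_l) + \beta_l)$.

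The remaining loose ends are invertibility conditions, which I would dispatch briefly: $H \succ 0$ by the first assumption of Proposition~\ref{prop:bilevel_reformulation}; $W_l$ nonsingular by~\eqref{eq:bilevel_sosc}; $\alpha_l > 0$ since $\mu^\star_l\alpha_l = 2\bigl(\mathcal{H}(x^\star_l,y) - \mathcal{H}(\bar{x},y)\bigr) > 0$ at the most likely failure point; and $K_l$ may be assumed invertible after discarding any zero diagonal entries, which only lowers $r_l$. Since $\alpha_l\det(W_l) + \beta_l$ is polynomial in the entries of $W_l$, the reformulated expressions extend continuously to configurations where the intermediate inverse of $W_l$ degenerates.
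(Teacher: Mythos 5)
Your proposal is correct and follows essentially the same route as the paper's proof: the quadratic model plus the stationarity condition give $\mathcal{H}(x^\star_l,y)-\mathcal{H}(\bar{x},y)=\tfrac12\mu^\star_l\alpha_l$, and the prefactor identity $C^\star_l(y)=\mu^\star_l\det(\nabla^2_{xx}\mathcal{H})\bigl(\alpha_l\det(W_l)+\beta_l\bigr)$ is obtained via $\mathop{\mathrm{adj}}(X_l)=\det(X_l)X_l^{-1}$, the matrix-determinant lemma, and the Woodbury identity. The only cosmetic difference is that your Woodbury form passes through $K_l^{-1}$ (handled by your density/continuity remark), whereas the paper uses the equivalent form $X_l^{-1}=H^{-1}+\mu^\star_l H^{-1}Q_lW_l^{-1}K_lQ_l^\top H^{-1}$ that avoids inverting $K_l$.
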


\begin{proof}
    See Appendix~\ref{appendix:proof_determinant}.
\end{proof}

Unlike equations \eqref{eq:prefactor_zeroth} and \eqref{eq:B-star}, equation \eqref{eq:prefactor_zeroth_reformulated} involves the determinant and adjugate of the small $r_l \times r_l$ matrix $W_l$.
This allows efficient computation of the scalar $\beta_l$, and hence, of the overall failure rate and its gradient.
Moreover, it avoids the ill-conditioning that plagues the original constraint, and allows expressing it in an algebraic optimization system.

\subsection{Efficient practical implementation}
We now highlight other key algorithmic enhancements that are necessary to improve the practical performance of the FP-ACOPF. %
First, since the absolute value of the failure rates $\lambda_l^{\tau} (y)$ can be very small, we found it beneficial to implement the rate constraint~\eqref{eq:rate_constraint_conceptual} in its log-form.
Using equations \eqref{eq:failure_rate_first}, and \eqref{eq:prefactor_first_reformulated}--\eqref{eq:energy_factor_reformulated}, this is equivalent to:
\begin{align}
&\frac32 \log (\mu^{\star}_l) + \log \left( \lVert \nabla_x \Theta_l (x^\star_l, y) \rVert_S^2 \right) - \frac12 \log \left( \alpha_l \det (W_l) + \beta_l\right) \notag \\
&+\log \left(1 + \frac{2\tau}{\mu^{\star}_l\alpha_l} \right) - \frac{\mu^{\star}_l \alpha_l}{2\tau} 
\leq \log(\sqrt{2\pi \tau}\lambda_l^\mathrm{lim}). \label{eq:rate_constraint_log}
\end{align}
All intermediate terms involved in the left-hand side of~\eqref{eq:rate_constraint_log} are computable in closed form, except for the matrix $A_l$ which appears in the definition of $W_l \coloneqq I - \mu^{\star}_l A_l$.
A close examination of the formula for $A_l$ in Proposition~\ref{prop:bilevel_reformulation} reveals that this matrix can be efficiently constructed by first solving for the intermediate matrix $Z_l \in \mathbb{R}^{d \times r_l}$ as follows:
\begin{equation}\label{eq:Zl_linear_solve}
\nabla^{2}_{xx} \mathcal{H}(x, y) \cdot Z_l = Q_l (x^\star_l, y),
\end{equation}
and then setting $A_l = K_l(x^\star_l, y)\, Q_l^\top (x^\star_l, y) \, Z_l$.
We therefore introduce explicit decision variables $Z_l$, along with \eqref{eq:Zl_linear_solve} as constraints in the FP-ACOPF formulation.
Notably, this allows the complete rate constraint~\eqref{eq:rate_constraint_log}, including all of its intermediate expressions, to be computed algebraically.

Implementing the rate constraint~\eqref{eq:rate_constraint_conceptual} for a particular line $l \in \mathcal{L}'$ thus involves introducing $d r_l + d + 1$ additional variables, namely $Z_l \in \mathbb{R}^{d \times r_l}$, $x^\star_l \in \mathbb{R}^d$ and $\mu^{\star}_l \geq 0$ and $d r_l + d + 3$ additional constraints, namely \eqref{eq:bilevel_kkt_stationarity}--\eqref{eq:bilevel_sosc} and \eqref{eq:rate_constraint_log}--\eqref{eq:Zl_linear_solve}.
In practice, the rate constraints~\eqref{eq:rate_constraint_log} are binding for only a small fraction of critical lines.
Therefore, adding $\mathcal{O}(dr_l)$ constraints for the remaining non-critical lines unnecessarily increases model complexity.
To limit this growth, we add additional variables and constraints in an iterative manner, only for those lines $l \in \hat{\mathcal{L}}$ whose rate constraints~\eqref{eq:rate_constraint_log} are found to be violated.
Our algorithm can be described as follows.

\begin{enumerate}[leftmargin=4em,labelindent=16pt,label=\bfseries Step \arabic*:]
    \item Set $\hat{\mathcal{L}} = \emptyset$. %
    \item Define decision variables
    $x \in \mathbb{R}^d$, 
    $y \in \mathbb{R}^{m-d}$, and
    $\big\{(x^\star_l, \mu^{\star}_l, Z_l) \in \mathbb{R}^d \times \mathbb{R}_{\geq 0} \times \mathbb{R}^{d \times r_l} \big\}$ for each $l \in \hat{\mathcal{L}}$.\\
    Compute an optimal solution $\hat{z}$ of the problem:
    \begin{align*}\label{eq:acopf_iterative}
    \mathop{\text{minimize}} &\;\; \eqref{eq:generation_cost} \\
    \text{subject to} &\;\; \eqref{eq:real_power_balance}, \eqref{eq:imag_power_balance},
    \eqref{eq:real_power_gen_bounds}-\eqref{eq:line_flow_bounds}, \\
    &\;\; \eqref{eq:bilevel_kkt_stationarity}-\eqref{eq:bilevel_sosc},
    \eqref{eq:rate_constraint_log}, \eqref{eq:Zl_linear_solve}  \;\; \forall  l \in \hat{\mathcal{L}}.
    \end{align*}
    
    \item For all $l \in \mathcal{L}'\setminus \hat{\mathcal{L}}$ (possibly in parallel):
    \begin{enumerate}
        \item Use $\hat{z}$ to compute candidate values of $x^\star_l, \mu^{\star}_l, Z_l$ by solving equations \eqref{eq:bilevel_kkt_stationarity}-\eqref{eq:bilevel_sosc}, and
        \eqref{eq:Zl_linear_solve}.
        \item If inequality \eqref{eq:rate_constraint_log} is violated using $\hat{z}$ and the computed values of $x^\star_l, \mu^{\star}_l, Z_l$, update $\hat{\mathcal{L}} = \hat{\mathcal{L}} \cup \{l\}$ and save the computed values as warm-starts.
    \end{enumerate}
    
    \item If $\hat{\mathcal{L}}$ was updated, go to Step~2; %
    else, stop and output $(\hat{x}, \hat{y})$ as the optimal FP-ACOPF solution.
\end{enumerate}
\section{Numerical Simulations}\label{sec:numerical_simulations}

We now present results and insights obtained from several experiments.\footnote{Our Julia code is available at: \href{https://github.com/jacob-roth/OPF}{github.com/jacob-roth/OPF}.
All optimization problems were solved using Ipopt %
with linear solver MA27
via JuMP.
The runs were performed on 32 threads of an Intel Xeon Gold 6140 CPU at 2.30GHz with 512 GB RAM.
} %
The IEEE 118-bus test system with line limits $I^\mathrm{lim}_l$ obtained from  PGLib \cite{pglib}, is used in all experiments
except Section~\ref{sec:numerical_simulations:computational_efficiency} where we also include larger networks.
We set branch and shunt conductances to zero to satisfy the lossless assumption\footnote{For some context on this assumption, the mean branch %
resistance/reactance ratio is approximately 0.2.
Also, the mean %
difference between the ACOPF solutions computed with and without resistances is 0.004 p.u. %
and %
0.04 for the voltage magnitudes and angles, respectively.
},
remove transmission line taps for simplicity,
and set the system matrices $J$ and $S$ that appear in \eqref{eq:sde} as per \cite{zheng_2016_new}\footnote{We set generator masses $M = 0.0531$, generator damping $D_g = 0.05$, load damping $D_l = 0.005$ and perturbation parameter $D_v = 0.01$.}.
The numerical performance of the FP-ACOPF %
is largely unaffected by the choice of the noise parameter $\tau$, the line failure threshold $I^\mathrm{trip}_l$, and the network loading $(p_d, q_d)$ levels.
Therefore, Sections \ref{sec:numerical_simulations:failure_rate_analysis}, \ref{sec:numerical_simulations:computational_efficiency}, \ref{sec:numerical_simulations:kmc_simulations} and \ref{sec:numerical_simulations:accfm_simulations} use `baseline' values of $\tau = 10^{-4}$, $I^\mathrm{trip}_l = 1.10\, I^\mathrm{lim}_l$ and $(p_d, q_d)$ values from MATPOWER \cite{matpower}.
Their values can, however, strongly affect cascading behavior; therefore, Section~\ref{sec:numerical_simulations:sensitivity} includes a discussion of cascading sensitivity to their values.

\subsection{Analysis of line failure rates}\label{sec:numerical_simulations:failure_rate_analysis}
For a fixed dispatch point $y$, the true line failure rates $\lambda^\tau_l(y)$ are computed by solving problem~\eqref{eq:xstar} to obtain the most likely failure point $x^\star_l$ and its multiplier $\mu^{\star}_l$.
However, the FP-ACOPF computes an estimate, which we denote as $\tilde{\lambda}^\tau_l (y)$, by solving~\eqref{eq:bilevel_kkt_stationarity}--\eqref{eq:bilevel_sosc} instead of \eqref{eq:xstar}.
Fig.~\ref{fig:rates:approximation_quality} shows the corresponding approximation error for the subset of lines with the highest failure rates. %
We observe that the relative log-error in the approximation is less than $10^{-2}$.
Fig.~\ref{fig:rates:capacity_utilization} shows that lines with higher utilization of their flow capacity, defined as $\sqrt{\Theta_l(x, y)}/I_l^\mathrm{lim} \times 100\%$, do not necessarily have higher failure rates, thus illustrating that \textit{the magnitude of line flow is not an effective surrogate for its failure rate}.
For example the line with the second-largest capacity utilization has a negligible failure rate, whereas lines with the largest failure rates have capacity utilization close to 60\%.
For accuracy, we must thus work directly with the failure rates via~\eqref{eq:failure_rate_first}.

\begin{figure}[!t]
    \centering
    \subfloat[Failure rate approximation error\label{fig:rates:approximation_quality}]{%
        \resizebox{0.48\linewidth}{!}{\includegraphics{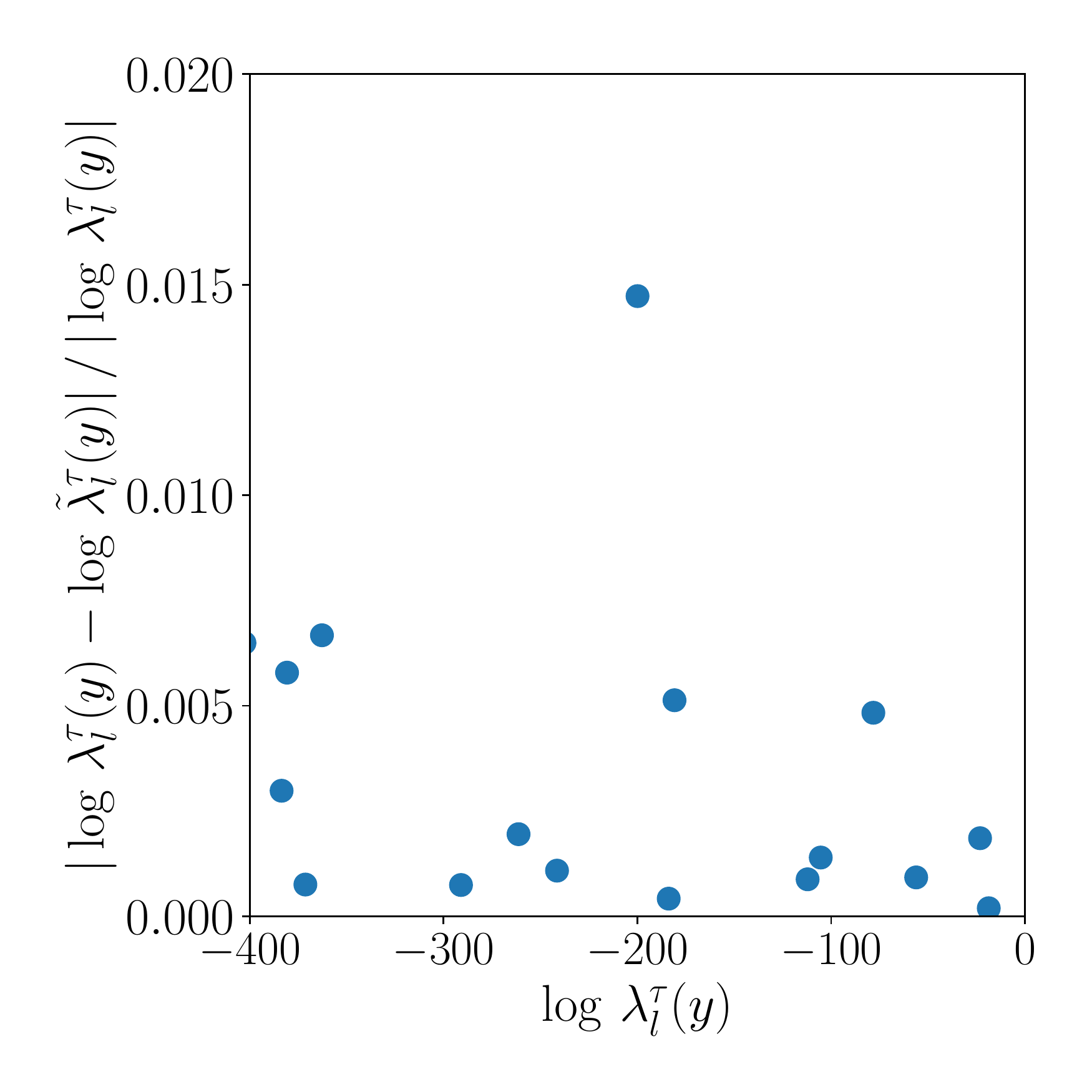}}
    }
    \hfil
    \subfloat[Capacity utilization vs. failure rate\label{fig:rates:capacity_utilization}]{%
        \resizebox{0.48\linewidth}{!}{\includegraphics{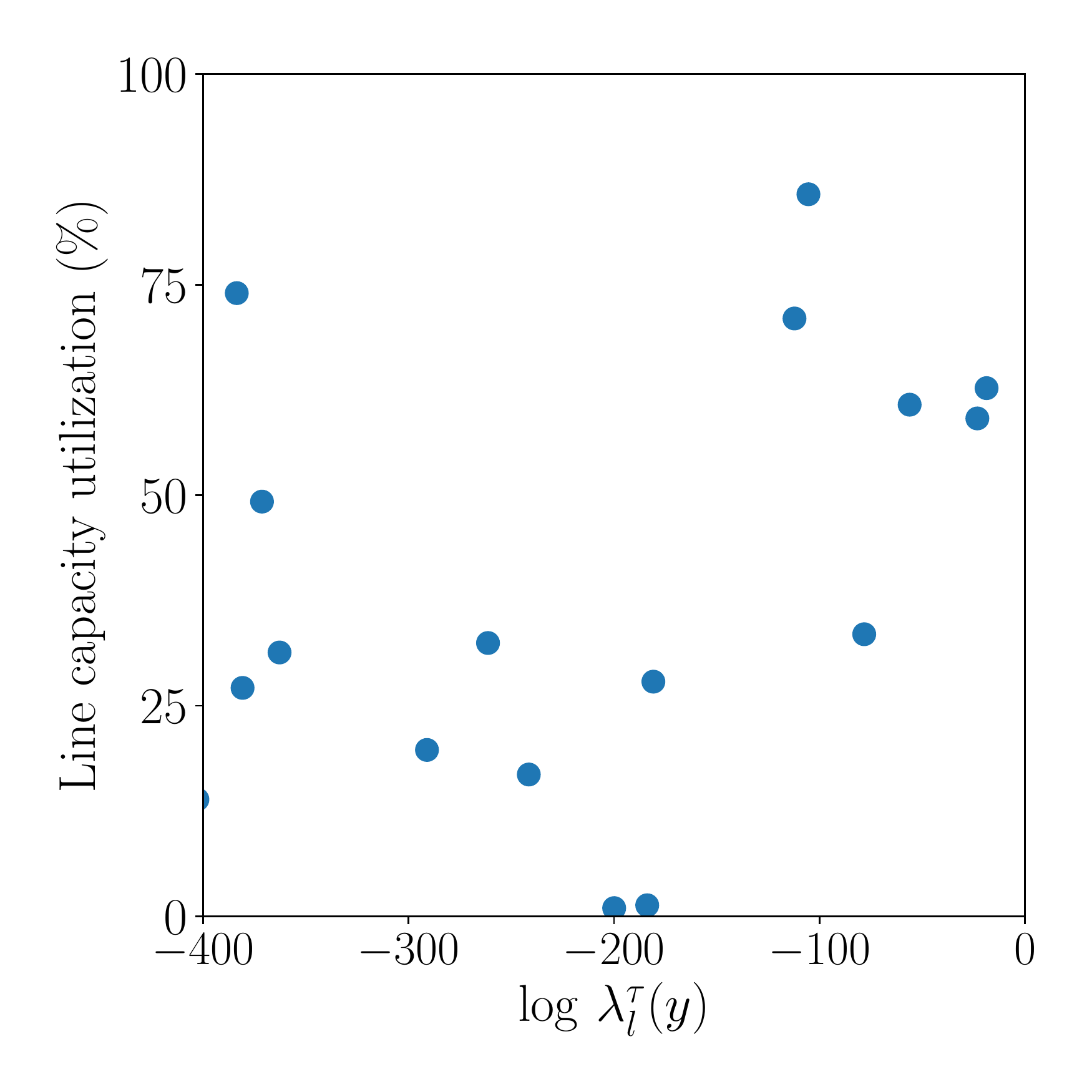}}
    }
    \caption{Fig.~\ref{fig:rates:approximation_quality} shows the relative log-error in approximating the true failure rate $\lambda^\tau_l (y)$ by $\tilde{\lambda}^\tau_l (y)$ in the rate-constrained model. %
        Fig.~\ref{fig:rates:capacity_utilization} shows line capacity utilization as a function of its failure rate.
        Each dot corresponds to a transmission line and $y$ is the classical (\emph{i.e.}, $N-0$) ACOPF dispatch.
    }
    \label{fig:rates}
\end{figure}

\subsection{Numerical and economic performance}\label{sec:numerical_simulations:computational_efficiency}
Table~\ref{table:numerical_performance_summary} summarizes the computational time and generation cost of the failure probability-constrained, the $N-0$, and the $N-1$ security-constrained\footnote{The set of contingencies is comprised of all lines, and post-contingency generation is allowed to vary by at most $0.1 p_{g,k}^\mathrm{max}$ at each generator $k \in \mathcal{G}$.} dispatch points for the 118-bus test system.
We observe that the FP-ACOPF solves within an order of magnitude of the time it takes to solve the $N-0$ ACOPF.
On the other hand, the $N-1$ model is slower by more than two orders of magnitude, although this may be because all contingencies are included in a single optimization problem.
In terms of generation cost, including the rate constraints results in a cost increase of less than 0.01\% compared to the $N-0$ model, even with extremely stringent rate limits, without requiring any load shedding.

It is worth pointing out the higher cost of the $N-1$ model comes with a guarantee of $N-1$ reliability.
Based on a contingency analysis of the FP-ACOPF dispatch points, we found that they remained feasible in $184$ (out of $186$) line contingencies, even though this was not explicitly imposed in their optimization.
Further analysis found that they could provide feasible power flows under all contingencies, but failed only to satisfy the thermal line limits \eqref{eq:line_flow_bounds} in the two infeasible contingencies.
In practice, operators may impose the failure rate constraints in each contingency of the $N-1$ ACOPF, thus ensuring both $N-1$ reliability and low failure probability.

\begin{table}[!t]
\caption{Summary of numerical performance for the 118-bus test case}
\label{table:numerical_performance_summary}
\centering
\begin{tabularx}{\columnwidth}{cccc}
    \toprule
    Formulation & $\displaystyle\max_{l \in \mathcal{L}'} \lambda_l^\tau(y)$ & Solve time (sec)  & Cost diff (\%) \\
    \midrule\midrule
    $N-0$                             & $7.8 \times 10^{-9}$  & 0.1   & -- \\
    $N-1$                             & $3.5 \times 10^{-9}$  & 73.0  & 0.04 \\
    $\lambda^\mathrm{lim} = 10^{-9}$  & $9.6 \times 10^{-11}$ & 2.7   & 0.00 \\
    $\lambda^\mathrm{lim} = 10^{-12}$ & $1.0 \times 10^{-12}$ & 2.8   & 0.00 \\
    $\lambda^\mathrm{lim} = 10^{-15}$ & $9.9 \times 10^{-16}$ & 2.3   & 0.01 \\
    \bottomrule
\end{tabularx}
\end{table}

Table~\ref{table:scalability_summary} summarizes the computational times of the FP-ACOPF for various PGLib test cases of increasing size.
Due to the distinct structures of these test networks, we impose failure rate limits that are a factor of 10 and 100 smaller than the maximum failure rate of their $N-0$ ACOPF dispatch point.
For some of the cases, the imposed rate limits may be so stringent that the corresponding FP-ACOPF formulation can become infeasible.
To recover a dispatch point in such cases, we first add load shedding (slack) variables $p_{s}, q_{s} \in \mathbb{R}^{n_b}$ to the power flow equations \eqref{eq:real_power_balance}--\eqref{eq:imag_power_balance} as follows:
\begin{gather*}
    p_{net,i} + \sum_{j \in \mathcal{N}} B_{ij} V_i V_j \sin(\theta_i - \theta_j) = p_{s,i}, \;\; i \in \mathcal{N}, \\
    q_{net,i} - \sum_{j \in \mathcal{N}} B_{ij} V_i V_j \cos(\theta_i - \theta_j) = q_{s,i}, \;\; i \in \mathcal{N},
\end{gather*}
and then minimize them in the objective function \eqref{eq:generation_cost} using a large quadratic penalty ($\phi = 10^6$) as follows:
\begin{equation*}
    \sum_{k \in \mathcal{G}} c_k(p_{g,k}) + \phi \sum_{i \in \mathcal{N}} \left(p_{s,i}^2 + q_{s,i}^2\right).
\end{equation*}
The column ``Load shed'' in Table~\ref{table:scalability_summary} reports the absolute values of these variables (as a fraction of the total system load).
The columns ``NLP time'' and ``Eval time'' report the total (wall-clock) time spent in Steps 2 and 3 of the algorithm described at the end of Section~\ref{sec:optimization_model}, respectively.

We make the following observations from Table~\ref{table:scalability_summary}.
First, when compared to the $N-0$ ACOPF, the time spent to solve the FP-ACOPF formulation increases roughly by a factor of 10 for both loose rate limits (across all cases) as well as tighter rate limits (up to the 500-bus case).
In contrast, for the tighter rate limit in the 1354-bus case, the time increases from roughly 3~sec to 560~sec.
This increase can be explained by the observation that for this particular case, Step~2 was invoked 3~times, each solving a nonlinear problem with an increasing number of failure rate constraints.
Second, the time spent in Step~3 to screen and evaluate violated failure rate constraints is an increasing function of the number of network lines.
Third, a nonzero (but small $\sim$2\%) amount of load must be shed in the 200-bus and 1354-bus cases to satisfy the imposed rate limits.
However, this phenomenon is also true in the $N-1$ ACOPF where it is common to allow constraint violations to ensure feasibility \cite{gocompetition}.
Finally, we note that the ``NLP time'' can be reduced using decomposition techniques that are used to solve the $N-1$ ACOPF \cite{chiang2014structured}, whereas the ``Eval time'' can be reduced using a larger number of parallel computing processes.
Indeed, although not shown in the table, increasing the number of processes from 32 to 64 decreases the evaluation time in the largest case from 557~sec to roughly 370~sec.

\begin{table*}[!htb]
    \caption{Summary of numerical performance across PGLib test cases\label{table:scalability_summary}}
    \centering%
    \begin{tabularx}{\linewidth}{cR*{6}{R}}
        \toprule
        Case & $N-0$ & \multicolumn{3}{c}{$\displaystyle\lambda^\mathrm{lim} = 10^{-1} \times \max_{l \in \mathcal{L}'} \lambda_l^\tau\,(N-0\ \mathrm{dispatch})$}  & \multicolumn{3}{c}{$\displaystyle\lambda^\mathrm{lim} = 10^{-2} \times \max_{l \in \mathcal{L}'} \lambda_l^\tau\,(N-0\ \mathrm{dispatch})$} \\
        \cmidrule(r){2-2}\cmidrule(lr){3-5}\cmidrule(l){6-8}
             & NLP time (sec) & NLP time (sec) & Eval time (sec) & Load shed (\%) & NLP time (sec) & Eval time (sec) & Load shed (\%) \\
        \midrule\midrule
         118 &      0.1 &      0.7 &      0.7 &     0.00 &      0.6 &      1.1 &     0.00 \\
         200 &      0.1 &      2.8 &      2.0 &     0.79 &      8.2 &      2.3 &     2.10 \\
         300 &      0.3 &      3.3 &      4.7 &     0.00 &      2.9 &      5.8 &     0.00 \\
         500 &      0.6 &      5.2 &     17.7 &     0.00 &     10.5 &     18.4 &     0.00 \\
        1354 &      3.2 &     19.9 &    351.0 &     0.11 &    554.2 &    557.0 &     0.19 \\
        \bottomrule
    \end{tabularx}%
\end{table*}

\subsection{Cascading failure simulations of different dispatch points}\label{sec:numerical_simulations:kmc_simulations}

To compare the cascade potentials of the failure probability-constrained, the $N-0$, and the $N-1$ security-constrained ACOPF, we perform $1000$ cascading failure simulations per dispatch point, using the Kinetic Monte Carlo (KMC) simulator developed in~\cite{roth2019kinetic}. %
In each simulation, the original network is randomly perturbed with two initial line contingencies\footnote{Initial contingencies are sampled as follows: \emph{(i)} choose the first contingency uniformly at random from all available lines, \emph{(ii)} sample a realization $d$ from a Zipf distribution with parameters $s = 3$ and $N = 10$, \emph{(iii)} group all lines that are topological distance $d$ from the first initial contingency, and \emph{(iv)} uniformly sample the second contingency from this group.}.

For a given initial network state and dispatch point, KMC simulates cascading sequences of line failures by using the failure rate expression in~\eqref{eq:failure_rate_first} to identify the next likely line failure.
Besides the initial dispatch point, 
KMC assumes that no operator interventions take place over the cascading time horizon, which we set to be one~hour based on the typical time scale of operator actions \cite{ren2008using,dobson_2012_estimating}.
Moreover, it assumes that after a failure occurs, the system `re-equilibriates' to a new static equilibrium (satisfying the power flow equations) before the next failure occurs.
That is, the system reaches a post-failure state (without triggering any intervening failures) in an amount of time that is small relative to the time until the next failure.
This assumption is implicit in the majority of existing quasi-steady-state models \cite{henneaux2018benchmarking} and numerical evidence in support of this assumption can be found in \cite{roth2019kinetic}.

Each post-failure state satisfies the power flow equations and hence is a local minimizer~\eqref{eq:xbar} of the energy function, appropriately modified (via the admittance matrix) to account for the degraded network topology.
Following each line failure, the slack bus is assumed to maintain power balance in the network, and buses that no longer connect to the slack bus (under the modified network topology) are disconnected, as well as buses whose voltage magnitudes fall below 0.9 p.u.
The cumulative active demand $p_{d}$ at disconnected buses serves as a measure of lost load.
This enables fair comparison of different dispatch points, on the basis of time and severity until full system collapse or until 
the end of the cascading time horizon (one hour). %

    Fig.~\ref{fig:kmc_failed_lines_load_shed_120} shows the distributions of the number of failed lines and the amount of load shed for each dispatch point, in the form of survival functions, for baseline values of $\tau = 10^{-4}$, $I^\mathrm{trip}_l = 1.10\, I^\mathrm{lim}_l$ and nominal MATPOWER $(p_d, q_d)$.
    Their averages and standard deviations are summarized in Table~\ref{table:kmc_stats}.

    Fig.~\ref{fig:kmc_log_failed_lines_cdf_120} shows the probability that the total number of line failures is at least as large as a given value, as a proportion of all $1000$ simulations.
    Here, we observe that all survival functions decrease as the number of line failures increases, and this decrease accelerates when the number of failures is large, indicating that the frequency of large blackouts decreases rapidly.
    Moreover, unlike the $N-1$ model, the FP-ACOPF dispatch can be tuned via the rate limits $\lambda^\text{lim}$, and for sufficiently stringent $\lambda^\text{lim} \in \{10^{-12}, 10^{-15}\}$, it can reduce the average number of line failures by more than 50\% when compared to the $N-0$ and $N-1$ models, as seen in Table~\ref{table:kmc_stats}.%

\begin{figure*}[!htb]
    \centering
    \subfloat[Distribution of number of failed lines \label{fig:kmc_log_failed_lines_cdf_120}]{%
        \resizebox{0.45\linewidth}{!}{\includegraphics{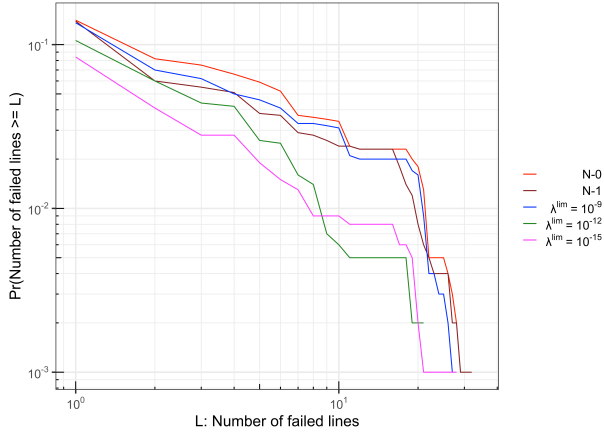}}
    }\hfil
    \subfloat[Distribution of load shed (nominal load $\approx 42$ p.u.) \label{fig:kmc_log_load_shed_cdf_120}]{%
        \resizebox{0.45\linewidth}{!}{\includegraphics{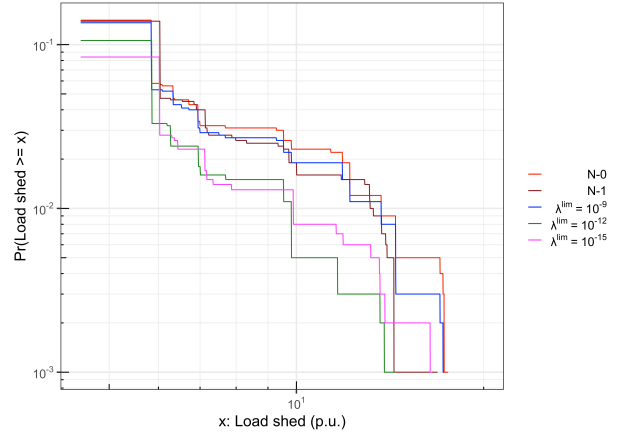}}
    }
    \caption{Distribution of number of failed lines and load shed using the KMC simulator, considering all simulations.}
    \label{fig:kmc_failed_lines_load_shed_120}
\end{figure*}

\begin{table}[!ht]
    \caption{Average number of failed lines and load shed (standard deviation in parentheses) over all KMC simulations}%
    \label{table:kmc_stats}
    \centering
    \begin{tabularx}{0.9\columnwidth}{ccc}
        \toprule
        Formulation & Number of failed lines & Load shed (p.u.) \\
        \midrule\midrule
        $N-0$                             & {$0.88\;\;(3.52)$} & {$1.06\;\;(2.85)$} \\ %
        $N-1$                             & {$0.71\;\;(3.17)$} & {$1.02\;\;(2.71)          $} \\ %
        $\lambda^\mathrm{lim} = 10^{-9}$  & {$0.75\;\;(3.21)$} & {$1.00\;\;(2.72)$} \\ %
        $\lambda^\mathrm{lim} = 10^{-12}$ & {$0.39\;\;(1.78)$} & {$0.71\;\;(2.15)$} \\ %
        $\lambda^\mathrm{lim} = 10^{-15}$ & {$0.33\;\;(1.92)$} & {$0.60\;\;(2.11)          $} \\ %
        \bottomrule
    \end{tabularx}
\end{table}

    Fig.~\ref{fig:kmc_log_load_shed_cdf_120} shows the distribution of load shed for each dispatch point as a probability of the load shed being at least as large as a given value, over all $1000$ simulations.
    We find that the $N-1$ security-constrained ACOPF performs as well as the FP-ACOPF in reducing the probability of very large load shedding, which is not surprising since the FP-ACOPF formulation explicitly constrains the probability of only the first line failure.
    However, unlike the former, the latter is also able to reduce the probability of small-to-intermediate demand losses, leading to a more than 30\% reduction in average load shed for $\lambda^\text{lim} \in \{10^{-12}, 10^{-15}\}$.

Fig.~\ref{fig:kmc_time_cdf_120} shows the average time until a certain number of lines have failed.
We find that, by delaying the system's first failure time, the FP-ACOPF is able to successfully delay--and hence, prevent--subsequent line failures which could lead to system collapse. For example, in the first 15 minutes after the first failure, FP-ACOPF with $\lambda^\mathrm{lim} = 10^{-15}$ shows about two times fewer lines failures compared to the $N-1$ dispatch.

\begin{figure}[!htbp]
    \centering
    \includegraphics[width=\linewidth]{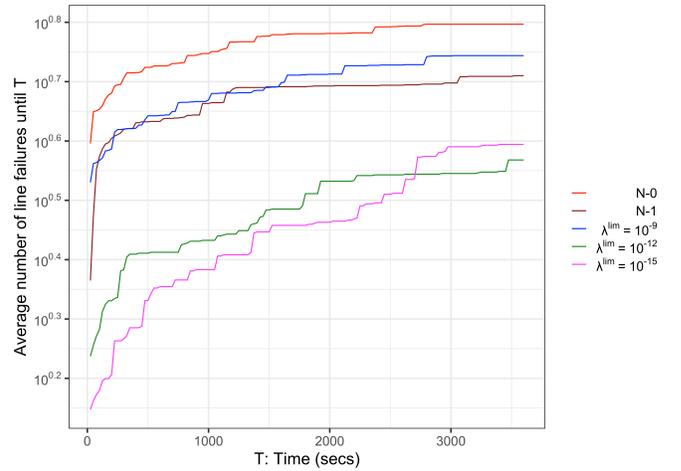}
    \caption{Average timeline of line failures (averaged across simulations where at least one failure occurred).}
    \label{fig:kmc_time_cdf_120}
\end{figure}

\subsection{Validation and benchmarking of cascade statistics}\label{sec:numerical_simulations:accfm_simulations}

The goal of this section is to validate the simulation statistics from the previous section.
We proceed to do this in three ways: 1) qualitative comparisons with historical cascades, 2) quantitative comparisons with an alternate AC-based cascade simulator that is based on an entirely different methodology, and finally, 3) validation of some of the key assumptions.

\subsubsection{Historical cascades}

The number of line failures and load shed in historical outage data are known to follow a heavy-tailed or Zipf distribution \cite{dobson_2012_estimating,henneaux2018benchmarking}.
This pattern can also be seen in Figs.~\ref{fig:kmc_log_failed_lines_cdf_120} and~\ref{fig:kmc_log_load_shed_cdf_120}; specifically, each survival function decreases roughly linearly in these log-log plots, which is characteristic of a power law distribution.
Although this linear behavior is more pronounced in Fig.~\ref{fig:kmc_log_failed_lines_cdf_120}, it is also evident for an intermediate range of values in Fig.~\ref{fig:kmc_log_load_shed_cdf_120}.
In addition, the distinctive `knee' in Fig.~\ref{fig:kmc_log_failed_lines_cdf_120} has been observed in other cascading failure models as well \cite{henneaux2018benchmarking}. %

Historical cascades also show a distinctive acceleration during their initial stages \cite{dobson_2012_estimating,noebels2020ac}.
This is somewhat evident in Fig.~\ref{fig:kmc_time_cdf_120} where the $N-1$ and FP-ACOPF dispatch points with $\lambda^\text{lim} \in \{10^{-12}, 10^{-15}\}$ exhibit a rapid succession of failures in the beginning but then appear to taper off.

We note that even though our simulations cannot be directly compared to historical data (since they correspond to different systems and initiating events), the above observations indicate that the KMC cascading failure model captures important patterns that are representative of historical outages.
A more thorough validation and comparison against real outage data can be found in \cite{roth2019kinetic}.

\subsubsection{Comparison with an alternate cascading failure simulator}

We now compare the cascade potentials of the various dispatch points using AC-CFM, an existing AC-based quasi-steady-state simulator developed in~\cite{noebels2020ac}.
We perform $1000$ simulations per dispatch point, where the original network (which we adjusted to be lossless) is randomly perturbed with the same two initial line contingencies as in Section~\ref{sec:numerical_simulations:kmc_simulations}.
In contrast to the KMC simulator, AC-CFM uses a set of deterministic rules to model various protection mechanisms.
In addition to line trips (which we adjusted per eq.~\eqref{eq:line_energy} with $I_{l}^{\mathrm{trip}}=1.10$) and under-voltage load shedding---mechanisms modeled by KMC---it also models under-frequency load (and over-frequency generator) shedding, and under- and over-excitation limiters.

\begin{figure*}[!htb]
    \centering
    \subfloat[Distribution of number of failed lines\label{fig:accfm_failed_lines_cdf_120}]{%
        \resizebox{0.45\linewidth}{!}{\includegraphics{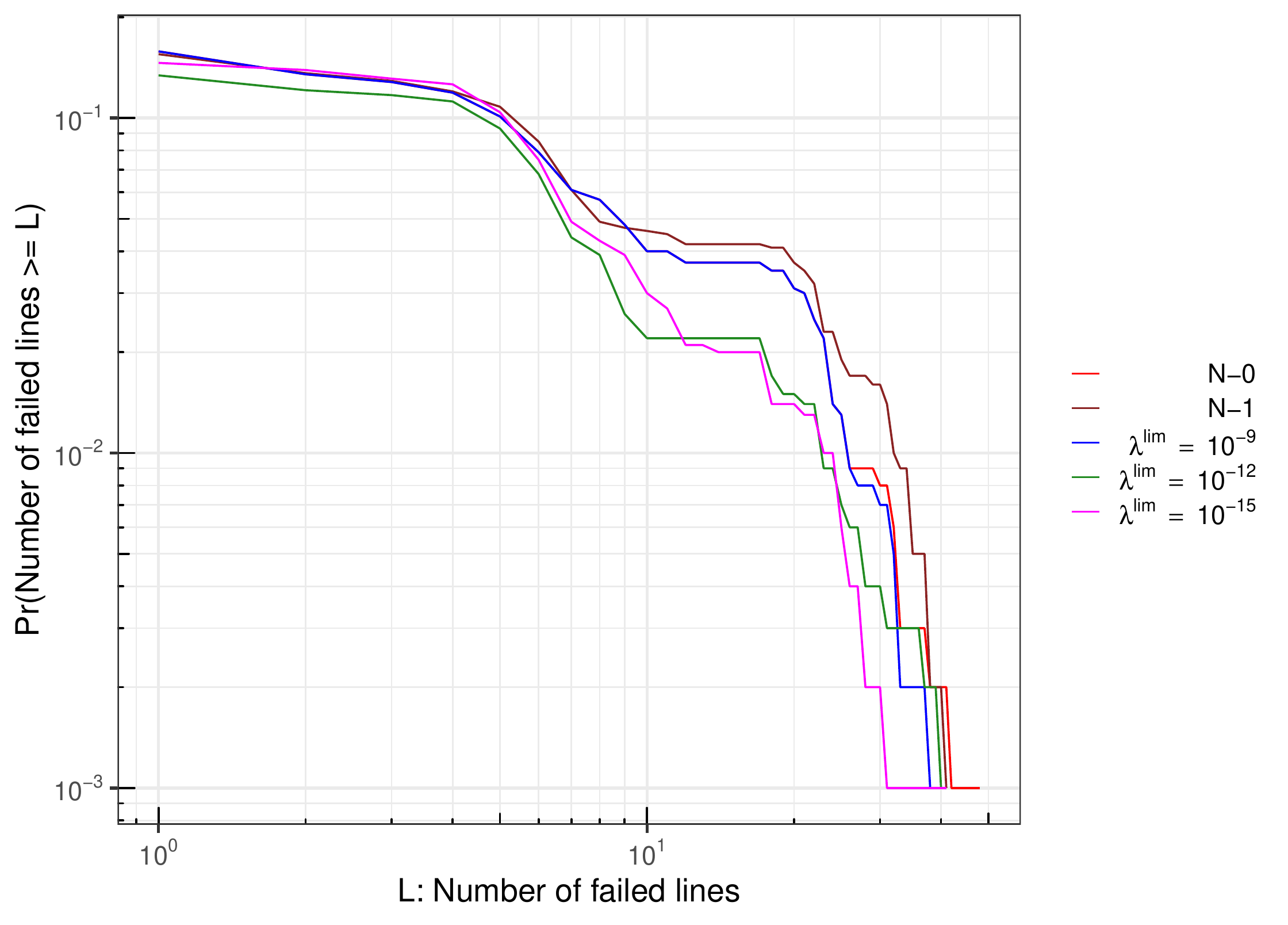}}
    }\hfil
    \subfloat[Distribution of load shed (nominal load $\approx 42$ p.u.) \label{fig:accfm_load_shed_cdf_120}]{%
        \resizebox{0.45\linewidth}{!}{\includegraphics{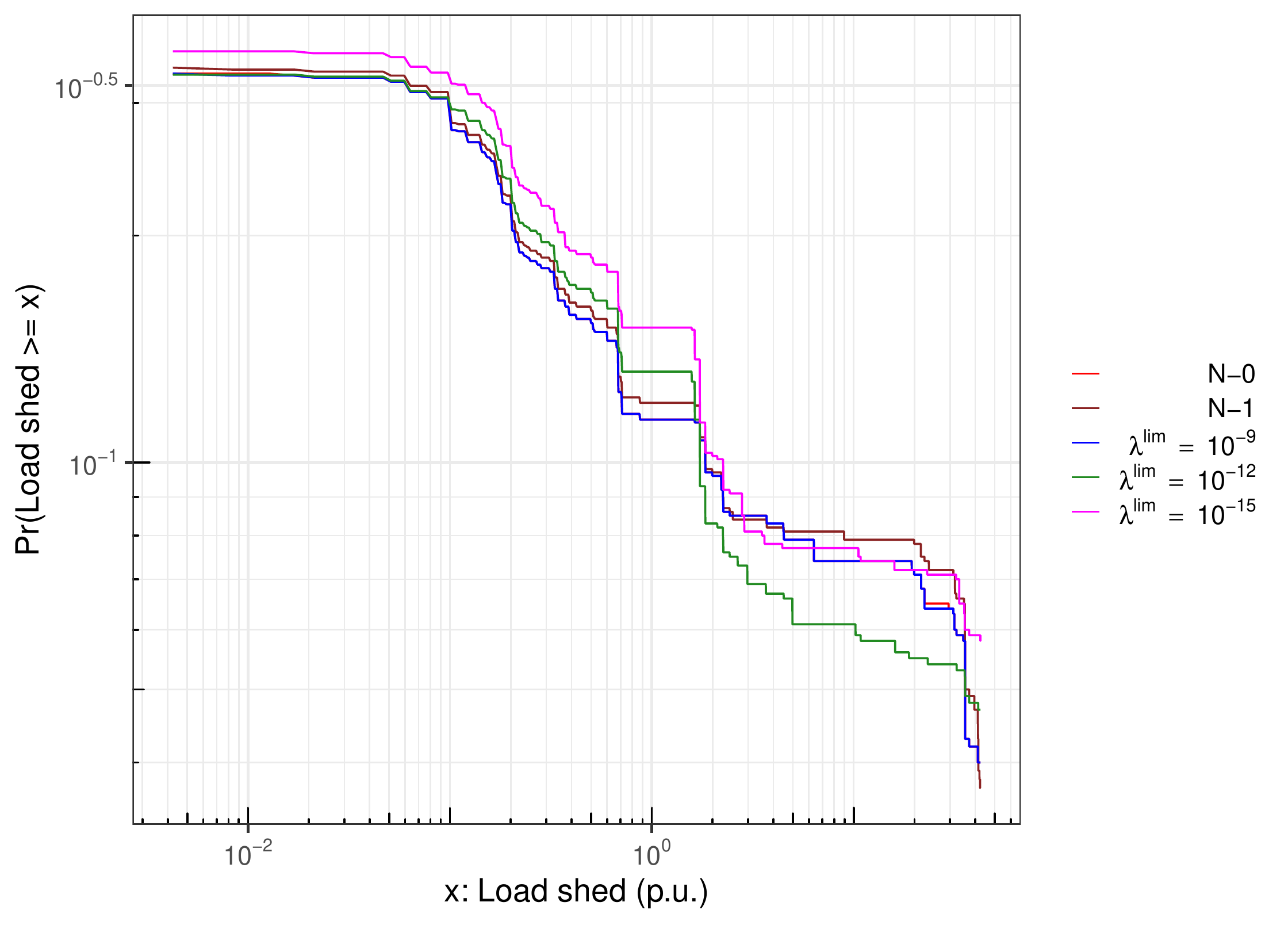}}
    }
    \caption{Distribution of number of failed lines and load shed using the AC-CFM simulator, considering all simulations.}
    \label{fig:accfm}
\end{figure*}

Figs.~\ref{fig:accfm_failed_lines_cdf_120} and~\ref{fig:accfm_load_shed_cdf_120} show the distribution of the number of failed lines and total load shed for each dispatch point, respectively, as now predicted by AC-CFM.
For small values of the number of failed lines in Fig.~\ref{fig:accfm_failed_lines_cdf_120}, the survival functions have a similar shape but begin at different levels.

Examining the intercept of Fig.~\ref{fig:accfm_failed_lines_cdf_120} shows that, relative to $N-0$,  the dispatch points corresponding to FP-ACOPF with $\lambda^{\mathrm{lim}} \in\{10^{-12}, 10^{-15}\}$ demonstrate a 15\% and 8\% reduction, respectively, in the number of observed cascades with at least one failure, compared to a 2\% reduction demonstrated by $N-1$.
In addition, Fig.~\ref{fig:accfm_failed_lines_cdf_120} indicates that the FP-ACOPF dispatch points have lower failure probabilities for intermediate-to-large numbers of failed lines.
Though no full system failures were observed, this pattern can also be found in Fig.~\ref{fig:accfm_load_shed_cdf_120} for load shed greater than $1$ p.u, indicating that the FP-ACOPF dispatch points can also reduce the frequency of intermediate-to-large demand losses at the expense of possibly inducing a higher proportion of smaller losses.

Finally, Table~\ref{table:accfm_stats} shows that across all simulations, the best FP-ACOPF dispatch point reduced the average number of line failures by more than 25\% (and had lower variability) and reduced the average amount of load lost by more than 10\%, compared to $N-0$ ACOPF.
In contrast, the $N-1$ dispatch point \emph{increased} the number of failures by 10\% (with larger variability) and \emph{increased} the amount of load lost by 8\%.

\begin{table}[!htb]
    \caption{Average number of failed lines and load shed (standard deviation in parentheses) over all AC-CFM simulations}%
    \label{table:accfm_stats}
    \centering
    \begin{tabularx}{0.9\columnwidth}{ccc}
        \toprule
        Formulation & Number of failed lines & Load shed (p.u.) \\
        \midrule\midrule
        $N-0$                             & {$1.48\;\;(5.08)$} & {$2.94\;\;\phantom{1}(9.94)$} \\ %
        $N-1$                             & {$1.63\;\;(5.62)$} & {$3.17\;\;(10.41)          $} \\ %
        $\lambda^\mathrm{lim} = 10^{-9}$  & {$1.46\;\;(4.92)$} & {$2.93\;\;\phantom{1}(9.93)$} \\ %
        $\lambda^\mathrm{lim} = 10^{-12}$ & {$1.08\;\;(3.96)$} & {$2.56\;\;\phantom{1}(9.46)$} \\ %
        $\lambda^\mathrm{lim} = 10^{-15}$ & {$1.15\;\;(3.80)$} & {$3.21\;\;(10.58)          $} \\ %
        \bottomrule
    \end{tabularx}
\end{table}

Additional experiments across a range of AC-CFM parameters, including which protection mechanisms are modeled along with their thresholds, point to the fact that it is difficult to directly compare absolute estimates of failure probability with the KMC simulator.
This is not surprising since they are based on different assumptions, and similar observations across a large range of simulators have been made in \cite{henneaux2018benchmarking}.
Nevertheless, we find it encouraging that the various dispatch points exhibit similar patterns (in particular in terms of their relative ordering with respect to cascade sizes) even when evaluated using a cascading methodology, AC-CFM, that is different from our KMC approach, and that FP-ACOPF can lead to lower failure probabilities under certain conditions.

    \subsubsection{Validation of assumptions via transient stability analysis}\label{sec:numerical_simulations:dynamics}
    The KMC and AC-CFM simulators are both based on quasi-steady-state methodologies.
    They implicitly assume not only that the system stabilizes to a new static equilibrium (satisfying the power flow equations), but also that this happens in a time frame that is small relative to the time until the next failure.
    However, these assumptions can fail because of system dynamics that are not modeled in these simulators.
    For example, generator desynchronization induced by an outage may cause the system to become unstable or to stabilize to another state than the one predicted by the simulators.
    To validate these assumptions, we use an existing tool \cite{PowerDynamics2021} to perform stability analysis and verify: \textit{(i)} whether the FP-ACOPF dispatch point is a stable operating point; \textit{(ii)} whether the settling time following a failure (if the system remains stable) is relatively smaller than the expected next failure time as predicted by equation \eqref{eq:failure_probability}; and \textit{(iii)} whether the new stable operating state is same as that predicted by equation \eqref{eq:xbar}.

    To that end, we consider the FP-ACOPF dispatch point ($\lambda^{\mathrm{lim}} = 10^{-9}$) and simulate system dynamics (using classical swing equations for generators and assuming constant active and reactive power outputs at load buses) for multiple fault scenarios, each corresponding to a line outage.
    We found that in roughly 89\% of the (134 total) simulations, the system oscillations were damped to stable values in 27.3~seconds on average (with a maximum and standard deviation of 34.6 and 3.1~seconds, respectively), whereas only 11\% of simulations were reported as being potentially unstable.    
    Figure~\ref{fig:branch96_omega} illustrates the typical response of representative system variables following a line outage where the system remained stable.
    
    We also observed that, in all of the stable scenarios, the system stabilized to the same equilibrium state as the one predicted by equation \eqref{eq:xbar} (appropriately modified to account for the failed line).
    On the other hand, among the unstable scenarios where equation \eqref{eq:xbar} also converged to a solution, only roughly half of them turned out to be dynamically unstable.
    These experiments indicate that the aforementioned quasi-steady-state assumptions do hold for the majority of line outages, although this is likely to be true only up to a certain point where the system has not degraded significantly.
    At the same time, they also highlight the challenges and need to model system dynamics more accurately in the context of cascading failures \cite{song2015dynamic,schafer2018dynamically}.
    
    \begin{figure}[!htbp]
        \centering
        \includegraphics[width=\linewidth]{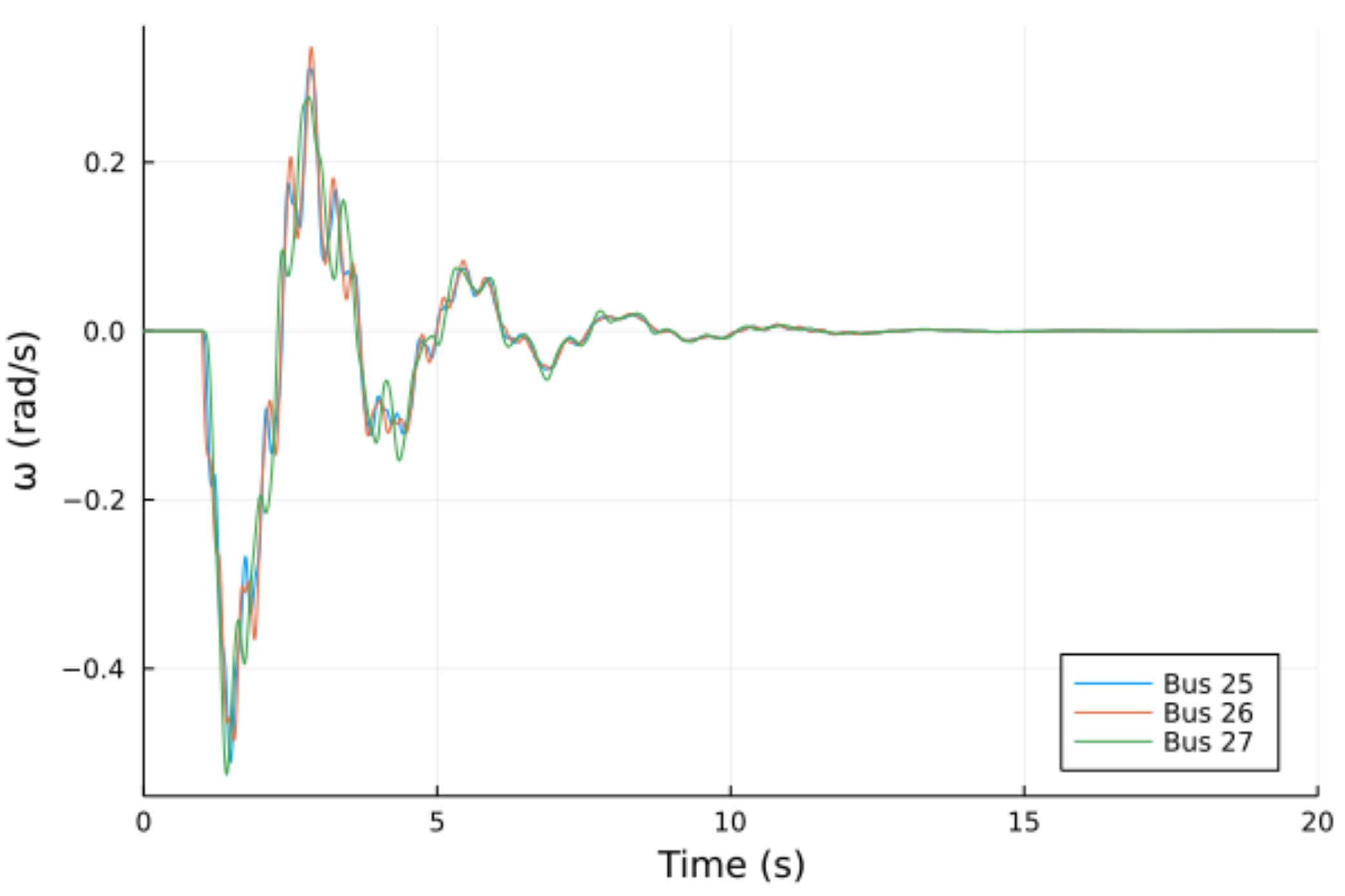}
        \caption{Representative generator angular velocities (relative to system frequency) following the outage of line $96$ %
            at time $t = 1$ second.\label{fig:branch96_omega}}
    \end{figure}

\subsection{Sensitivity analysis}\label{sec:numerical_simulations:sensitivity}

To study the sensitivity of our results to the choice of %
line failure threshold $I_l^\text{trip}$ and loads $(p_d, q_d)$, we repeated our analysis by varying them as follows:
$I_l^\mathrm{trip} \in \{1.05, 1.10, 1.12\} \times I_l^\mathrm{lim}$
and $(p_d, q_d)\in \{1.0, 1.05\}\times \mathrm{nominal}$ MATPOWER values%
, where the lower $I_l^\text{trip}$ and higher $(p_d, q_d)$ values were chosen so as to stress the system towards larger failures,
and the higher $I_l^\text{trip}$ value was chosen to elicit the opposite effect.
Fig.~\ref{fig:sensitivity} summarizes the KMC cascading failure simulations of the various dispatch points under these settings.

\begin{figure*}[!htb]
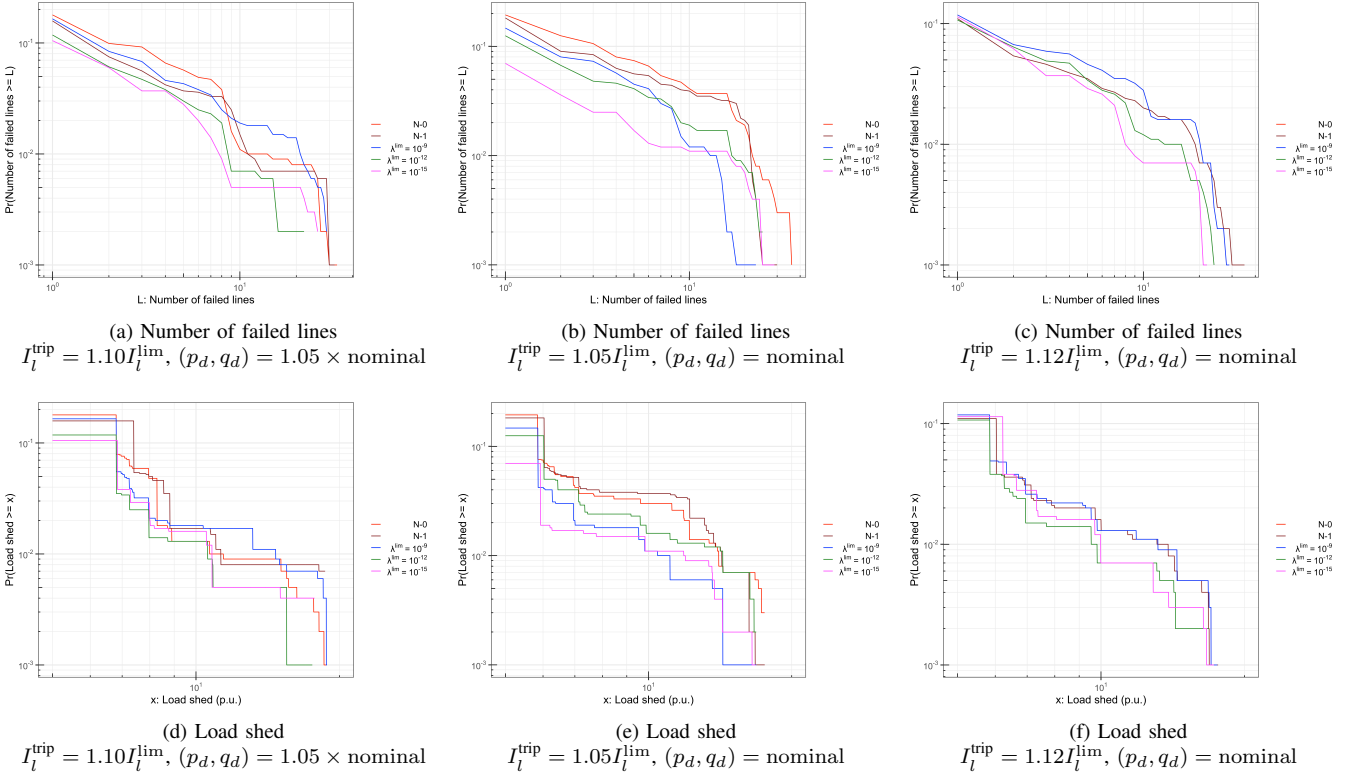

    \centering
    \captionsetup[subfigure]{justification=centering}
    \subfloat[][Number of failed lines \\ $I_l^\text{trip} = 1.10 I_l^\mathrm{lim}$,  $(p_d, q_d) = 1.05 \times \mathrm{nominal}$\label{fig:kmc_log_failed_lines_cdf_120_105}]{%
        \resizebox{0.32\linewidth}{!}{\includegraphics{figspdf/log_failed_lines_cdf_120_105}}
    }\hfil
    \subfloat[][Number of failed lines \\ $I_l^\text{trip} = 1.05 I_l^\mathrm{lim}$,  $(p_d, q_d) = \mathrm{nominal}$\label{fig:kmc_log_failed_lines_cdf_110}]{%
        \resizebox{0.32\linewidth}{!}{\includegraphics{figspdf/log_failed_lines_cdf_110}}
    }\hfil
    \subfloat[][Number of failed lines \\ $I_l^\text{trip} = 1.12 I_l^\mathrm{lim}$,  $(p_d, q_d) = \mathrm{nominal}$\label{fig:kmc_log_failed_lines_cdf_125}]{%
        \resizebox{0.32\linewidth}{!}{\includegraphics{figspdf/log_failed_lines_cdf_125}}
    }

    \subfloat[][Load shed \\ $I_l^\text{trip} = 1.10 I_l^\mathrm{lim}$,  $(p_d, q_d) = 1.05 \times \mathrm{nominal}$\label{fig:kmc_log_load_shed_cdf_120_105}]{%
        \resizebox{0.32\linewidth}{!}{\includegraphics{figspdf/log_load_shed_cdf_120_105}}
    }\hfil
    \subfloat[][Load shed \\ $I_l^\text{trip} = 1.05 I_l^\mathrm{lim}$,  $(p_d, q_d) = \mathrm{nominal}$\label{fig:kmc_log_load_shed_cdf_110}]{%
        \resizebox{0.32\linewidth}{!}{\includegraphics{figspdf/log_load_shed_cdf_110}}
    }\hfil
    \subfloat[][Load shed \\ $I_l^\text{trip} = 1.12 I_l^\mathrm{lim}$,  $(p_d, q_d) = \mathrm{nominal}$\label{fig:kmc_log_load_shed_cdf_125}]{%
        \resizebox{0.32\linewidth}{!}{\includegraphics{figspdf/log_load_shed_cdf_125}}
    }
    \caption{Distribution of number of failed lines and load shed for various values of line failure threshold $I_l^\text{trip}$ and system loads $(p_d, q_d)$ using KMC.}
    \label{fig:sensitivity}
\end{figure*}

First, we observe that the distribution of the number of failed lines (Figs.~\ref{fig:kmc_log_failed_lines_cdf_120_105}--\ref{fig:kmc_log_failed_lines_cdf_125}) for each dispatch point is an approximate power law in the range $L \in [1, 10]$, followed by a distinctive `knee' similar to Fig.~\ref{fig:kmc_log_failed_lines_cdf_120}, indicating a significant decrease in the probability of large line failures.
The distribution of the load shed (Figs.~\ref{fig:kmc_log_load_shed_cdf_120_105}--\ref{fig:kmc_log_load_shed_cdf_125}) is also an approximate power law, although for a smaller range of load shedding values.
Second, we observe that the FP-ACOPF generally outperforms the $N-0$ and $N-1$ ACOPF, both in terms of reduced probabilities of large line failures as well as load shedding.
Specifically, for higher load levels (Figs.~\ref{fig:kmc_log_failed_lines_cdf_120_105} and~\ref{fig:kmc_log_load_shed_cdf_120_105}), we find that the FP-ACOPF dispatches with $\lambda^\text{lim} \in \{10^{-12}, 10^{-15}\}$ perform better than the other dispatch points;
for lower line failure thresholds (Figs.~\ref{fig:kmc_log_failed_lines_cdf_110} and~\ref{fig:kmc_log_load_shed_cdf_110}),
the FP-ACOPF dispatches with $\lambda^\text{lim} \in \{10^{-9}, 10^{-15}\}$ perform significantly better, lowering failure probabilities by a factor of three on average, compared to both $N-0$ and $N-1$;
and for higher line failure thresholds (Figs.~\ref{fig:kmc_log_failed_lines_cdf_125} and~\ref{fig:kmc_log_load_shed_cdf_125}),
the FP-ACOPF dispatch points with $\lambda^\text{lim} \in \{10^{-12}, 10^{-15}\}$ have the lowest failure probabilities.
In this setting, we note that the survival functions of the $N-0$ dispatch coincide precisely with that of $\lambda^\text{lim} = 10^{-9}$. In summary, we find that the improvement in performance in regards to cascading risk of FP-ACOPF relative to $N-0$ and $N-1$ ACOPF appears to hold for a range or parameter choices beyond the ones derived from PGLib \cite{pglib} and MATPOWER \cite{matpower} that we used for the assessments in the preceding subsections.

\section{Conclusion and Future Work}\label{sec:conclusions}

This paper takes the first steps towards quantifying and proactively reducing failure risk--and implicitly, cascading risk--in  operational dispatch.
Our proposed FP-ACOPF generalizes the standard ACOPF formulation by constraining the probability of component failures due to automatic relay trips, via analytic functions of the system state and stochastic load fluctuations derived from our previous work \cite{roth2019kinetic}.
By using techniques from bilevel optimization and numerical linear algebra, we reformulated the probability constraints entirely algebraically allowing their solution using off-the-shelf nonlinear solvers.
We empirically showed that constraining failure probabilities--which is equivalent to increasing the system's expected first failure time--is a safe approximation and effective surrogate for constraining the probability of a cascading failure sequence starting from any single component outage.
Simulation outputs from two different cascading failure simulators provide evidence that FP-ACOPF can significantly outperform classical $N-0$ and $N-1$ security-constrained ACOPF models, in terms of reducing the probability of line outages and load shedding,
without incurring significantly larger computational or economic costs.

We envision future work along several directions.
From a methodological viewpoint, we need to extend the failure probability model and the corresponding optimization model to lossy networks. %
From a modeling viewpoint, more general multiple-component failure rate constraints, as well as extensions that combine classical $N-1$ models with failure rate constraints need to be investigated.
From a practical viewpoint, the model parameters (e.g., $\tau$) need to be calibrated and the model itself needs to be further validated and compared against other established cascading simulators on real network data. %
We believe these extensions can open up several other use cases for our method including long-term planning decisions, line capacity allocations, and contingency screening. %

\appendix
\section{}

\subsection{Closed-form expressions of low-rank factors}\label{appendix:closed_form_low_rank}

Consider the line overcurrent function~\eqref{eq:line_energy} for a line $l = (i, j)$.
First, consider the case where both $i, j \in \mathcal{N}'$ are non-generator buses, so that $V_i, V_j, \theta_i, \theta_j$ are all part of the state vector~$x$.
In this case, $r_l = 3$ and only the rows of $Q_l$ corresponding to these components have nonzero elements.
Suppose $1_{w} \in \mathbb{R}^d$ is the canonical unit vector with $1$ in the $w$-component and $0$ otherwise.
Then, it can be verified that
$
\nabla_{xx}^2 \Theta_l = Q_l K_l Q_l^\top,
$
where
$K_l = \mathrm{diag}\left(1, 1, -1\right)$
and the matrix $Q_l$ is as follows:
\begin{enumerate}
    \item\label{temp:q_0} $Q_l^\top 1_{w} = 0 \in \mathbb{R}^3$ for all $w \notin \{V_i, V_j, \theta_i, \theta_j\}$,
    \item $Q_l^\top \begin{bmatrix}
    1_{V_i} & 1_{V_j} & 1_{\theta_i} & 1_{\theta_j}
    \end{bmatrix}$ is given by:
    \begingroup
    \small
    \begin{gather*}
    \begin{bmatrix}
    1 & -c_{ij} & V_j s_{ij}       & -V_j s_{ij} \\
    0 &  s_{ij} & V_i + V_j c_{ij} & -V_i - V_j c_{ij} \\
    0 &  0      & \sqrt{V_i^2 + V_j^2 + V_i V_j c_{ij}} & -\sqrt{V_i^2 + V_j^2 + V_i V_j c_{ij}}
    \end{bmatrix} \\
    c_{ij} \coloneqq \cos(\theta_i - \theta_j), \;\;
    s_{ij} \coloneqq \sin(\theta_i - \theta_j).
    \end{gather*}
    \endgroup
\end{enumerate}

If $j \in \mathcal{N} \setminus (\mathcal{N}' \cup \{\sigma\})$ is a non-slack generator bus, then $V_i, \theta_i, \theta_j$ are part of $x$, but not $V_j$.
In this case, $r_l = 2$,
$K_l = \mathrm{diag}\left(1, V_i V_j c_{ij} - V_j^2 s_{ij}^2\right)$
and, along with condition 1) described above, the matrix $Q_l$ is characterized by:
\begingroup
\small
\begin{equation*}
    Q_l^\top \begin{bmatrix}
    1_{V_i} & 1_{\theta_i} & 1_{\theta_j}
    \end{bmatrix}
    =
    \begin{bmatrix}
    1 & V_j s_{ij} & -V_j s_{ij} \\
    0 & 1          & -1
    \end{bmatrix}
    \end{equation*}
\endgroup
If $j = \sigma$ is the slack bus, then $r_l = 2$, and the above expressions for $K_l$ and $Q_l$ continue to be applicable,
except $\theta_j$ is not part of $x$ and has no corresponding column in $Q_l$.

\subsection{Proof of Proposition~\ref*{prop:bilevel_reformulation}}\label{appendix:proof_bilevel}
We drop the subscript $l$ from all quantities to simplify the notation.
First, observe that if $x^\star$ satisfies equation \eqref{eq:bilevel_feasibility}, \emph{i.e.}, $\Theta (x^\star, y) = \Theta^{\max}$, then it must also satisfy $\nabla_x \Theta (x^\star, y) \neq 0$.
Indeed, by definition of $\Theta$ in \eqref{eq:line_energy}, it follows that $\nabla_x \Theta (x^\star, y) = 0$ implies $V_i^\star = V_j^\star = 0$.
This, in turn, implies $\Theta (x^\star, y) = 0$, which is a contradiction $\Theta^{\max} > 0$.
Hence, $\nabla_x \Theta (x^\star, y) \neq 0$, and the \textit{linearly independent constraint qualification} is satisfied for problem \eqref{eq:xstar} at $x^\star$.
Therefore, its first-order optimality conditions, which are \eqref{eq:bilevel_kkt_stationarity} using the Taylor approximation \eqref{eq:energy_function_quadratic} and equation \eqref{eq:bilevel_feasibility}, must be necessarily satisfied at $x^\star$.

Now, consider the Hessian of the Lagrangian of problem \eqref{eq:xstar} at the primal-dual pair $(x^\star, \mu^\star)$:
$
M \coloneqq \nabla_{xx}^2 \mathcal{H}(x, y) - \mu^\star \nabla_{xx}^2 \theta (x^\star, y),
$
where again we have used the Taylor approximation \eqref{eq:energy_function_quadratic}.
Along with the first-order conditions, the second-order sufficient conditions ensure that, if $M \succ 0$, then $x^\star$ is a local solution of problem \eqref{eq:xstar}.
In the following paragraph, we show that $M \succ 0$ is equivalent to condition \eqref{eq:bilevel_sosc}.

Denote $\nabla^2 \mathcal{H} \coloneqq \nabla_{xx}^2 \mathcal{H}(x, y)$, $Q^\star \coloneqq Q(x^\star, y)$ and $K^\star \coloneqq K(x^\star, y)$.
Using factorization \eqref{eq:low_rank_factorization} and by using the hypothesis $\nabla^2 \mathcal{H} \succ 0$ in the statement of the Proposition, we have
\begin{align*}
& M = \nabla^2 \mathcal{H} - \mu^\star Q^\star K^\star (Q^\star)^\top \succ 0 \\
\iff & I - \mu^\star [\nabla^2 \mathcal{H}]^{-1/2} Q^\star K^\star (Q^\star)^\top [\nabla^2 \mathcal{H}]^{-1/2} \succ 0 \\
\iff & \mu^\star\, \rho \left( [\nabla^2 \mathcal{H}]^{-1/2} Q^\star K^\star (Q^\star)^\top [\nabla^2 \mathcal{H}]^{-1/2} \right) < 1 \\
\iff & \mu^\star\, \rho \left( K^\star (Q^\star)^\top [\nabla^2 \mathcal{H}]^{-1} Q^\star \right) < 1.
\end{align*}
where the last equivalence from the fact that the non-zero eigenvalues of $AB$ coincide with those of $BA$ for arbitrary matrices $A$ and $B$ of appropriate size.

\subsection{Proof of Proposition~\ref*{prop:determinants}}\label{appendix:proof_determinant}
We again drop the subscript $l$ from all quantities to simplify notation.
Using equations \eqref{eq:energy_function_quadratic} and \eqref{eq:bilevel_kkt_stationarity}, we obtain:
\begin{align*}
\mathcal{H}(x^\star, y) - \mathcal{H}(x, y) &=  \frac12 (x^\star - x)^\top \nabla^2_{xx} \mathcal{H}(x, y) (x^\star - x) \\
&= \frac12 \mu^\star (x^\star - x)^\top \nabla_x \Theta (x^\star, y) = \frac12 \mu^\star \alpha,
\end{align*}
which shows the validity of \eqref{eq:prefactor_first_reformulated} and \eqref{eq:energy_factor_reformulated}.

Denote $\nabla^2 \mathcal{H} \coloneqq \nabla_{xx}^2 \mathcal{H}(x, y)$, $Q^\star \coloneqq Q(x^\star, y)$ and $K^\star \coloneqq K(x^\star, y)$.
To show validity of \eqref{eq:prefactor_zeroth_reformulated}, we will show that equation \eqref{eq:B-star} simplifies to $C^\star(y) = \mu^\star \det(\nabla^2 \mathcal{H}) \left( \alpha \det (W) + \beta\right)$.
The factorization \eqref{eq:low_rank_factorization} and Taylor approximation \eqref{eq:energy_function_quadratic} imply that~\eqref{eq:L-star} is equivalent to $X(y) = \nabla^2 \mathcal{H} - \mu^\star Q^\star K^\star (Q^\star)^\top$.
Furthermore, the proof of Proposition~\ref{prop:bilevel_reformulation} shows that $X(y) \succ 0$ and hence, that it is invertible.
Therefore, we have
\[
\mathop{\mathrm{adj}} (X(y)) = [X(y)]^{-1} \det(X(y)).
\]
Since $\nabla^2 \mathcal{H} \succ 0$, we can use the Woodbury identity to obtain:
\begin{align*}
[X(y)]^{-1} %
&= \left[\nabla^2 \mathcal{H}\right]^{-1} + \left[\nabla^2 \mathcal{H}\right]^{-1} \mu^\star Q^\star W K^\star (Q^\star)^\top \left[\nabla^2 \mathcal{H}\right]^{-1}
\end{align*}
Similarly, application of~\cite[Theorem~18.1.1]{Harville2008} gives:
\begin{align*}
\det(X(y)) %
&= \det (\nabla^2 \mathcal{H}) \det(W)
\end{align*}
Equation \eqref{eq:energy_function_quadratic} implies
$
\nabla \mathcal{H}^{\star}(y) = \nabla^2 \mathcal{H} \cdot (x^\star - x),
$
and along with the above equations, this simplifies \eqref{eq:B-star} as follows:
\begin{align*}
C^\star(y) &=  \nabla^\top \mathcal{H}^{\star}(y) \mathop{\mathrm{adj}} (X(y))  \nabla \mathcal{H}^{\star}(y) \\
&= \det (\nabla^2 \mathcal{H}) \det(W) \cdot \Big \{ (x^\star - x)^\top \, \nabla^2 \mathcal{H} \, (x^\star - x)  \\
&\hspace{4em} + \mu^\star (x^\star - x)^\top Q^\star W K^\star (Q^\star)^\top (x^\star - x)  \Big\} \\
&= \mu^\star \det(\nabla^2 \mathcal{H}) \left( \alpha \det (W) + \beta\right)
\end{align*}
where the last equality follows from equation \eqref{eq:bilevel_kkt_stationarity} and the definition of $\alpha$ and $\beta$.

\section*{Acknowledgment}
The authors would like to thank Prof. Ian Dobson from Iowa State University for help at various stages of this work, as well as Matthias Noebels from the University of Manchester for help with AC-CFM.  We thank the anonymous referees whose comments have significantly improved the paper. This material was based upon work supported by the U.S. Department of Energy, Office of Science, Office of Advanced Scientific Computing Research (ASCR) under Contract DE-AC02-06CH11347 and by NSF through award CNS-1545046.

\ifCLASSOPTIONcaptionsoff
  \newpage
\fi

\bibliographystyle{IEEEtran}
\bibliography{references_intro,references}

\begin{IEEEbiographynophoto}{Anirudh Subramanyam}
    is a postdoctoral researcher in the Mathematics and Computer Science Division at Argonne National Laboratory.
    He obtained his bachelor’s degree from the Indian Institute of Technology, Bombay and his Ph.D. from Carnegie Mellon University, both in chemical engineering. His research interests are in computational methods for nonlinear and discrete optimization under uncertainty with applications in energy, transportation and process systems.
\end{IEEEbiographynophoto}
\begin{IEEEbiographynophoto}{Jacob Roth}
  is a Ph.D. student in the ISyE department at the University of Minnesota. Previously, he received an M.S. from the University of Chicago and was a pre-doctoral appointee at Argonne National Laboratory.
\end{IEEEbiographynophoto}
\begin{IEEEbiographynophoto}{Albert Lam}
is a pre-doctoral appointee at Argonne National Laboratory. He received his B.S. from the University of New South Wales, and his M.S. from the University of Chicago.
\end{IEEEbiographynophoto}
\begin{IEEEbiographynophoto}{Mihai Anitescu}
    is a senior computational mathematician in the Mathematics
    and Computer Science Division at Argonne National Laboratory and a professor in the Department of Statistics at the University of Chicago. He obtained his engineer diploma (electrical engineering) from the Polytechnic University of Bucharest in 1992 and his Ph.D. in applied mathematical and computational sciences from the University of Iowa in 1997. He specializes in the areas of numerical optimization, computational science, numerical analysis and uncertainty quantification in which he has published more than 100 papers in scholarly journals and book chapters.
    He has been recognized for his work in applied mathematics by his selection as a SIAM Fellow in 2019.
\end{IEEEbiographynophoto}
\vfill
\noindent\fbox{\parbox{\columnwidth}{\footnotesize
        The submitted manuscript has been created by UChicago Argonne, LLC, Operator of Argonne National Laboratory (``Argonne''). Argonne, a U.S. Department of Energy Office of Science laboratory, is operated under Contract No. DE-AC02-06CH11357. The U.S. Government retains for itself, and others acting on its behalf, a paid-up nonexclusive, irrevocable worldwide license in said article to reproduce, prepare derivative works, distribute copies to the public, and perform publicly and display publicly, by or on behalf of the Government. The Department of Energy will provide public access to these results of federally sponsored research in accordance with the DOE Public Access Plan (http://energy.gov/downloads/doe-public-access-plan).}
}

\enlargethispage{-2mm}

\end{document}